\documentclass[12pt]{amsart}

\usepackage[margin=1.25in]{geometry}

\usepackage{amsrefs}
\usepackage{amssymb}

\usepackage{hyperref}

\allowdisplaybreaks

\newtheorem{theorem}[equation]{Theorem}
\newtheorem{corollary}[equation]{Corollary}
\newtheorem{lemma}[equation]{Lemma}

\newcommand{\eref}[1]{(\ref{e.#1})}
\newcommand{\tref}[1]{Theorem \ref{t.#1}}
\newcommand{\cref}[1]{Corollary \ref{c.#1}}
\newcommand{\lref}[1]{Lemma \ref{l.#1}}

\numberwithin{equation}{section}
 
\newcommand{\Z}{\mathbb{Z}}
\newcommand{\R}{\mathbb{R}}
\newcommand{\C}{\mathbb{C}}
\newcommand{\T}{\mathbb{T}}
\renewcommand{\H}{\mathbb{H}}
\newcommand{\E}{\mathbb{E}}
\newcommand{\V}{\mathbb{V}}
\renewcommand{\P}{\mathbb{P}}
\renewcommand{\Im}{\operatorname{Im}}
\renewcommand{\Re}{\operatorname{Re}}
\newcommand{\id}{\operatorname{1}}
\newcommand{\ep}{\varepsilon}

\begin{document}

\title{The regular tree Anderson model at low disorder}

\author{Reuben Drogin}

\author{Charles K Smart}

\begin{abstract}
We prove delocalization for the Anderson model on an infinite regular tree (or Cayley graph or Bethe lattice) at low disorder.  This extends earlier results of Klein and Aizenman--Warzel by filling in the previously missing parts of the spectrum.  Our argument generalizes to any disorder with small fourth moment and sufficiently regular density.  We prove continuity of the Lyapunov exponent as the disorder vanishes.
\end{abstract}

\maketitle

\section{Introduction}

\subsection{Delocalization results}

The Anderson model on the regular tree is the only homogeneous random Schr\"odinger operator for which delocalization at low disorder is proved.  The tree structure provides an exact self-consistent equation for the law of the Green's function.  The exact equation makes the tree model easier to analyze than its analogue on the Euclidean lattice $\Z^d$ for $d \geq 2$.  Nonetheless, there are still unresolved questions and this paper proves new delocalization results.

Let $\T$ be an infinite regular tree of degree $K+1 \geq 3$.  Let $H : \ell^2(\T) \to \ell^2(\T)$ be the random Hamiltonian defined by
\begin{equation*}
(H f)(p) = \sum_{q: q \sim p} f(q) - V(p) f(p)
\end{equation*}
where $V : \T \to \R$ is an independent and identically distributed random potential and $q \sim p$ means $p$ and $q$ are neighboring vertices.  Assume there is a disorder strength $\beta > 0$ and a regularity bound $L \geq 1$ such that
\begin{equation}
\label{e.fourthmoment}
\E V(p)^4 \leq \beta^4,
\end{equation}
and
\begin{equation}
\label{e.regularity}
\frac{\P(|V(p) - x| < t)}{\P(|V(p) - x| < s)} \leq \frac{L t}{s} \quad \mbox{for } x \in \R \mbox{ and } 0 < t < s < \beta.
\end{equation}
These conditions are satisfied by many random laws of interest, like uniform and Gaussian, but exclude heavy-tailed or singular laws like Cauchy or Bernoulli.

By Pastur's Theorem, the spectrum of $H$ is almost surely the sum of the interval $[-2 \sqrt K, 2 \sqrt K]$ and the support of the law of $V(p)$.  When the disorder is small, the spectrum of $H$ can be partitioned into localized and delocalized regions. 

\begin{theorem}
\label{t.spectral}
For every $\ep > 0$, there is a $\delta > 0$ depending only on $(K,L,\ep)$ such that, if $\beta < \delta$, then, almost surely, the spectral measure of $H$ is absolutely continuous in $\{ E \in \R : |E| < K + 1 - \ep \}$ and pure point in $\{ E \in \R : |E| > K + 1 + \ep \}$.
\end{theorem}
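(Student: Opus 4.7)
\emph{Plan.} The threshold $K+1$ arises as the energy at which the free-tree forward Green's function $\gamma_0(E) = (-E + \sqrt{E^2 - 4K})/(2K)$ satisfies $K|\gamma_0(E)| = 1$, the marginal case for fractional moment contraction (one solves $z - \sqrt{z^2-4K} = 2$). Above this threshold one expects fractional-moment decay; below it one expects a nondegenerate distributional fixed point with positive imaginary part. I would split the argument at $|E| = K+1$.

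For localization on $\{|E| > K+1+\ep\}$, the plan is the Aizenman--Molchanov fractional moment method adapted to the tree. Conditioning on the forward Green's functions $\Gamma_j$ from the $K$ children and using the regularity hypothesis \eref{regularity} to integrate out $V(p)$ yields a spectral-averaging bound on $\E|\Gamma|^s$ in terms of a quantity depending on the $\Gamma_j$. For $s$ sufficiently close to $1$, small $\beta$, and $|E|>K+1+\ep$, this recursion contracts because it is a perturbation of the deterministic computation $K|\gamma_0(E)|<1$. This gives $\E|G(p,q;E+i\eta)|^s \leq C\mu^{d(p,q)}$ with $\mu<1$, uniformly in $\eta>0$, whence pure point spectrum by the standard fractional-moment-to-spectral-measure machinery.

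For delocalization on $\{|E| < K+1-\ep\}$, the plan is to analyze the distributional self-consistent equation
\[
\Gamma \stackrel{d}{=} \Bigl(-E - i\eta - V - \sum_{j=1}^K \Gamma_j\Bigr)^{-1},
\]
with $\{\Gamma_j\}$ i.i.d.\ copies of $\Gamma$ independent of $V$. I would look for an invariant family $\mathcal{F}_\eta$ of laws on the upper half-plane $\H$, stable under this recursion, satisfying $\E \Im \Gamma \geq c(\ep)>0$ uniformly in $\eta \in (0,1]$. A compactness argument along $\eta \to 0^+$ then produces an $\eta=0$ fixed point with strictly positive imaginary part, which by the Simon--Wolff criterion gives absolutely continuous spectrum in this window.

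The main obstacle is the subregime $2\sqrt{K} < |E| < K+1-\ep$. Here $\gamma_0(E)$ is real and $K|\gamma_0(E)|<1$, so the zero-disorder fixed point is attractive and $\Im\Gamma$ cannot be set up as a perturbation of it: it must be generated and sustained entirely by the randomness. The hypothesis \eref{fourthmoment} should give the right second-order scale for this disorder-driven imaginary part, while \eref{regularity} is needed to prevent $\Gamma$ from concentrating on the real axis where the recursion becomes singular. Klein's argument covers $|E|<2\sqrt{K}$ by perturbation from a nontrivial complex free fixed point and Aizenman--Warzel extends this only partially; designing $\mathcal{F}_\eta$ so that the disorder-produced $\Im\Gamma$ survives iteration across the entire range $|E| < K+1-\ep$ is, I expect, where the bulk of the new technical work lies.
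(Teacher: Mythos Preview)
Your localization plan matches the paper: for $|E|>K+1+\ep$ the paper simply invokes Aizenman's fractional-moment result, which is what you outline.

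For delocalization your route diverges from the paper, and the divergence is exactly at the point you flag as the obstacle. You propose to prove $\E\Im\Gamma \geq c(\ep)>0$ uniformly in $\eta$ and then invoke Simon--Wolff. In the subregime $2\sqrt K < |E| < K+1-\ep$ the free fixed point $w_E$ is real, and you correctly note that any positive imaginary part must be generated by the disorder; but you offer no mechanism for why it survives iteration, only the statement that this is where the work lies. That is a genuine gap in the proposal: no concrete strategy for the hard regime.

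The paper sidesteps this difficulty entirely by using a different delocalization criterion. Instead of controlling $\Im\Gamma$, it controls the Lyapunov exponent $\lambda_{E+i\eta}=\E\log|\Gamma|$ and invokes the Aizenman--Warzel criterion: if $\liminf_{\eta\to 0}\inf_{E\in I}\lambda_{E+i\eta}>-\log K$ then the spectrum in $I$ is almost surely absolutely continuous. The main Green's function estimate (\tref{green}) shows that $\Gamma$ concentrates near $w_E$ in \emph{modulus}, uniformly in $\eta$, across all three dynamical regimes of the M\"obius map $\phi_E$ (elliptic, parabolic, hyperbolic). Combined with weak integrability this gives $|\lambda_{E+i\eta}-\log|w_E||\leq C\beta^{1/21}$. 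Since $|w_E|>1/K$ throughout $|E|<K+1$ (with a quantitative margin $\log|w_E|\geq -\log K + c\ep$ for $|E|\leq K+1-\ep$), the criterion fires.

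The conceptual point you are missing: in the hyperbolic subregime the paper does \emph{not} try to keep $\Im\Gamma$ away from zero. It allows $\Gamma$ to sit near the real point $w_E$ and exploits only that $|w_E|>1/K$ there. Delocalization is detected through the off-diagonal decay rate of the Green's function beating the tree growth rate, not through positivity of the imaginary part of the diagonal. This is what makes the hyperbolic case tractable---it becomes a contraction estimate for the real dynamics (handled via sub-Cauchy tail bounds propagated through the fixed-point iteration) rather than a fluctuation-sustains-imaginary-part argument.
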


When the random potential is bounded, the entire spectrum is delocalized and the result can be stated in purely dynamical terms.

\begin{corollary}
\label{c.dynamical}
There is a $\delta > 0$ depending only on $(K,L)$ such that, if $\beta < \delta$ and $\P(|V(p)| \leq \beta) = 1$, then, almost surely,
\begin{equation}
\label{e.dynamical}
\lim_{t \to \infty} \sum_{p \in S} |(e^{\pm i t H} f)(p)|^2 = 0
\end{equation}
holds for every $f \in \ell^2(\T)$ and finite $S \subseteq \T$.
\end{corollary}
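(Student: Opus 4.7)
The plan is to reduce \cref{dynamical} to \tref{spectral} by showing that the boundedness hypothesis on $V$ places the entire almost-sure spectrum of $H$ inside the absolutely continuous window of \tref{spectral}, and then to invoke the standard consequence of pure absolutely continuous spectrum.

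Because $|V(p)| \leq \beta$ almost surely, the support of the law of $V(p)$ lies in $[-\beta, \beta]$, so Pastur's theorem gives
\begin{equation*}
\sigma(H) \subseteq [-2 \sqrt{K} - \beta, \, 2 \sqrt{K} + \beta] \quad \text{almost surely}.
\end{equation*}
Since $K + 1 \geq 3$ forces $K \geq 2$, the quantity $K + 1 - 2 \sqrt{K} = (\sqrt{K} - 1)^2$ is strictly positive and depends only on $K$. Set $\ep_0 := \tfrac{1}{3}(\sqrt{K} - 1)^2$ and apply \tref{spectral} with $\ep = \ep_0$ to obtain a threshold $\delta_0 = \delta_0(K, L) > 0$. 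Take $\delta := \min(\delta_0, \ep_0)$. For $\beta < \delta$, we have $2 \sqrt{K} + \beta < 2 \sqrt{K} + \ep_0 = K + 1 - 2 \ep_0$, so $\sigma(H) \subseteq \{ |E| < K + 1 - \ep_0 \}$ and \tref{spectral} delivers almost surely purely absolutely continuous spectral measure for $H$.

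For the dynamical conclusion, I would use the standard argument that pure absolutely continuous spectrum forces matrix elements $\langle \delta_p, e^{\pm i t H} \delta_q \rangle$ to vanish at infinity. By polarization, the complex spectral measure $\mu_{p, q}(\cdot) := \langle \delta_p, \mathbf{1}_{\cdot}(H) \delta_q \rangle$ is absolutely continuous for each pair $p, q$, so the Riemann--Lebesgue lemma gives $\langle \delta_p, e^{\pm i t H} \delta_q \rangle \to 0$ as $|t| \to \infty$. Finiteness of $S$ then yields $\sum_{p \in S} |(e^{\pm i t H} \delta_q)(p)|^2 \to 0$ for each fixed $q$, and a density argument using the contraction bound $|(e^{\pm i t H} g)(p)| \leq \|g\|_{\ell^2}$ together with $|S| < \infty$ extends this to every $f \in \ell^2(\T)$, giving \eref{dynamical}.

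The corollary is essentially a packaging of \tref{spectral}, so I do not anticipate any substantive obstacle. The only bookkeeping to check is that the strict inequality $K + 1 > 2 \sqrt{K}$ (available for $K \geq 2$) leaves enough room to fit the bounded-potential spectrum inside the absolutely continuous window, with the resulting threshold depending only on $(K, L)$.
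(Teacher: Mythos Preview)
Your proposal is correct and follows essentially the same route as the paper: both confine the almost-sure spectrum to $[-2\sqrt K-\beta,\,2\sqrt K+\beta]$, use $K\geq 2$ to fit this inside the absolutely continuous window of \tref{spectral} for $\beta$ small, and then invoke the standard dynamical consequence of purely absolutely continuous spectrum. The only cosmetic difference is that the paper cites the RAGE theorem for the last step, whereas you spell out the Riemann--Lebesgue plus density argument; these amount to the same thing.
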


The results in \tref{spectral} and \cref{dynamical} are new for low disorder and energies in the region $2 \sqrt K - \ep < |E| < K + 1 - \ep$.  Other regions of the energy-disorder phase space were covered by prior work.  For large disorder, Aizenman--Molchanov \cite{MR1244867} proved pure point spectrum for all energies by introducing the fractional moment method.  For small disorder, Aizenman \cite{MR1301371} proved pure point spectrum for energies $|E| > K + 1 + \ep$ by perturbing off of the free Green's function to obtain fractional moment bounds.    On the delocalization side, there are a series of results for small disorder.  Klein \cite{MR1302384} proved absolutely continuous spectrum for energies $|E| < 2 \sqrt K - \ep$ using supersymmetry to derive second moment bounds for the Green's function.  Froese--Hasler--Spitzer \cite{MR2547606} gave a new proof of Klein's result using dynamics on the upper half-plane.  Aizenman--Warzel \cite{MR3055759} introduced a new criterion for delocalization and used it to prove absolutely continuous spectrum for energies $\beta - o(\beta) < |E| - 2 \sqrt K < \beta$ when the potential is bounded.  They also proved dense presence of absolutely continuous spectrum for energies $|E| < K + 1 - \ep$ when the potential has full support. Shortly after, Bapst \cite{MR3390777} used this criterion to prove that for a fixed $E$, $K$ large and disorder strength proportional to $K\log K$, there is a crossover from at least some continuous spectrum to full localization near $E$.

There is a large mathematical literature on the Anderson model on tree graphs.   The monograph Aizenman--Warzel \cite{MR3364516} is a good starting point.  Of the more recent work, the most immediately related is the preprint of Aggarwal--Lopatto \cite{arXiv:2503.08949}, who proved that in a similar large disorder, large $K$ regime as Bapst, the energy-disorder phase space is separated into finitely many regions of localized and delocalized spectrum. There is also a proof of quantum unique ergodicity in Anantharaman--Sabri \cite{MR3707062} for the bulk of the spectrum.  Moving away from the homogeneous model, there are results on localization and delocalization for random trees and random $d$-regular graphs, where the randomness has been moved from the potential into the graph structure itself.  This includes Arras--Bordenave \cite{MR4645723}, Alt--Ducatez--Knowles \cites{MR4328063, MR4691859}, and Bauerschmidt--Huang--Yau \cites{MR4692880, MR3962004}. 

\subsection{The Aizenman--Warzel criterion}

The proof of \tref{spectral} relies on the Aizenman--Warzel criterion \cite{MR3055759} for delocalization.  This criterion is based on a certain Lyapunov exponent.

Given a vertex $p \in \T$ and energy $z \in \H$ in the upper half-plane $\H$, let $\pi_p : \ell^2(\T) \to \ell^2(\T \setminus \{ p \})$ denote the natural projection, let
\begin{equation*}
G^p_z = (\pi_p (H - z) \pi_p^*)^{-1}
\end{equation*}
denote the punctured Green's function, and let
\begin{equation}
\label{e.lyapunov}
\lambda_z = \E \log |G^p_z(q,q)|
\end{equation}
for any $p \sim q \in \T$ denote the Lyapunov exponent.  Since $H$ is self-adjoint, both the punctured Green's function and the Lyapunov exponent are well-defined and qualitatively bounded.  The resolvent identity implies the factorization
\begin{equation*}
G^{p_0}_z(p_1,p_n) = \prod_{k = 1}^n G^{p_{k-1}}_z(p_k,p_k)
\end{equation*}
for all simple paths $p_0, ..., p_n \in \T$.  Computing logarithmic moments gives
\begin{equation*}
\E \log |G^{p_0}_z(p_1,p_n)| = n \lambda_z
\end{equation*}
and shows the Lyapunov exponent measures the rate of decay of the Green's function off of the diagonal.

The delocalization criterion compares the rate of decay of the Green's function against the rate of growth of the metric ball in the tree.

\begin{theorem}[Aizenman--Warzel \cite{MR3055759}]
\label{t.criterion}
If $I \subseteq \R$ is a bounded open interval and
\begin{equation}
\label{e.criterion}
\liminf_{\eta \to 0} \inf_{E \in I} \lambda_{E + i \eta} > - \log K,
\end{equation}
then the spectral measure of $H$ in $I$ is almost surely absolutely continuous.
\end{theorem}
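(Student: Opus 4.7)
The plan is to translate the Lyapunov lower bound into positivity of the boundary values $\operatorname{Im} G_{E+i0}(p,p)$, which gives the density of the absolutely continuous part of the spectral measure. The heuristic is that $\lambda_z$ is the exponential rate of decay of the Green's function along paths in $\T$, while $\log K$ is the exponential rate of growth of the metric ball; when the first dominates there is enough ``flux'' on the tree to produce absolutely continuous spectrum.

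The main tool is the self-consistent equation on the tree. Writing $\Gamma_z(p) := G_z^{p^-}(p,p)$ for the punctured Green's function in the subtree below $p$, the resolvent identity gives $\Gamma_z(p)^{-1} = -z - V(p) - \sum_{q} \Gamma_z(q)$ with the $\Gamma_z(q)$ independent copies indexed by the $K$ children of $p$, and taking imaginary parts yields
\begin{equation*}
\operatorname{Im} \Gamma_z(p) = |\Gamma_z(p)|^2 \Bigl( \eta + \sum_{q \text{ child of } p} \operatorname{Im} \Gamma_z(q) \Bigr).
\end{equation*}
Iterating this recursion produces
$\operatorname{Im} \Gamma_z(p_0) = \eta \sum_{v} \prod_{k=0}^{|v|} |\Gamma_z(p_k)|^2$, a sum over all finite descending paths $p_0, p_1, \ldots, p_{|v|} = v$ from $p_0$. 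It suffices to show that, under the assumption $\lambda_z > -\log K$ uniformly on $I$, this sum grows at least as $1/\eta$ in a suitable probabilistic sense as $\eta \to 0$, so that $\operatorname{Im} \Gamma_z(p_0)$ and thence $\operatorname{Im} G_z(p_0,p_0)$ remain bounded away from zero.

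The iterated sum is naturally interpreted as the total mass of a branching random walk on $\T$ with step increment $\log|\Gamma_z|$, mean drift $\lambda_z$, and branching ratio $K$. The condition $\lambda_z > -\log K$ is precisely the threshold for a non-trivial tilted Biggins-type martingale to converge to a strictly positive limit; combined with the moment and regularity hypotheses \eref{fourthmoment} and \eref{regularity} on $V$ (which control the tails of $\log|\Gamma_z|$, particularly near resonances of $H$), this yields the desired lower bound on the iterated sum. The uniformity in $E \in I$ in the hypothesis then allows the estimates to be integrated in $E$ and passed through the $\eta \to 0$ limit via Fatou's lemma, giving $\operatorname{Im} G_{E+i0}(p,p) > 0$ for a.e.\ $E \in I$ and hence absolute continuity of the spectral measure of $H$ restricted to $I$.

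The main obstacle is extracting a quantitative lower bound on $\sum_v \prod_k |\Gamma_z(p_k)|^2$ that saturates the threshold $-\log K$ rather than the weaker $-\tfrac12 \log K$ suggested by a naive second-moment computation on the recursion (which loses a factor through Cauchy--Schwarz). This requires a careful large-deviation argument for the branching random walk, choosing the correct tilt $\theta$ so that the associated supercritical branching process captures the sharp threshold, together with an \emph{a priori} tail bound on $\log|\Gamma_z|$ furnished by the regularity assumption \eref{regularity}. Transferring the resulting almost-sure lower bound into a uniform-in-$E$ estimate on $I$ is a secondary technical point that is enabled by the uniform hypothesis on $\liminf_{\eta \to 0} \inf_{E \in I} \lambda_{E+i\eta}$.
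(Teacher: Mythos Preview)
The paper does not prove this theorem; it is stated with attribution to Aizenman--Warzel \cite{MR3055759} and used as a black box in the proof of \tref{spectral}. There is therefore no proof in the paper to compare against.

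That said, your sketch is broadly aligned with the actual Aizenman--Warzel argument: the imaginary-part recursion, its iteration into a sum over descending paths (their ``current'' formula), and the identification of $-\log K$ as the branching-random-walk threshold are all present in \cite{MR3055759}. Two points where your outline is looser than the real proof: first, the argument is not a direct lower bound showing the path sum grows like $1/\eta$, but rather a dichotomy---one assumes $\Im \Gamma_{E+i\eta} \to 0$ along a subsequence (which would happen on the singular part of the spectrum) and derives a contradiction from the divergence of $\sum_v \prod_k |\Gamma_z(p_k)|^2$ under the Lyapunov hypothesis. Second, the tail control on $\log|\Gamma_z|$ needed for the large-deviation step is obtained in \cite{MR3055759} from fractional-moment bounds, and the role of the regularity hypothesis \eref{regularity} is precisely to supply those; your reference to it is correct but the mechanism (a rank-one perturbation / Wegner-type estimate) is worth naming. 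Your identification of the sharp-threshold issue---getting $-\log K$ rather than $-\tfrac12\log K$---is exactly the advance of \cite{MR3055759} over the earlier second-moment approaches.
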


In light of this criterion and the prior work mentioned above, to prove \tref{spectral} it suffices to estimate the Lyapunov exponent.

\subsection{Green's function estimates}

The main contribution of this paper is the following Green's function estimate.

\begin{theorem}
\label{t.green}
There are constants $\alpha > 1 > \delta > 0$ depending only on $(K,L)$ such that, if $E \in \R$, $0 < \beta < \delta$, $0 < \eta < \beta^2$, and $p \sim q \in \T$, then
\begin{equation}
\label{e.green}
\P(|G^p_{E+i \eta}(q,q) - w_{E + i \eta}| \geq \alpha \beta^{1/20} |w_{E + i \eta}|) \leq \alpha \beta^{1/20} |w_{E + i \eta}|,
\end{equation}
where
\begin{equation}
\label{e.wz}
w_z = \frac{- z + \sqrt{ z + 2 \sqrt K } \sqrt{ z - 2 \sqrt K}}{2K}
\end{equation}
and $\sqrt{e^z} = e^{z/2}$ for $- \pi < \Im z \leq \pi$.
\end{theorem}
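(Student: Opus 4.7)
The approach rests on the self-consistent equation for the punctured Green's function. By the Schur complement formula and the automorphism-invariance of $H$ fixing the directed edge $(p,q)$, the random variable $g := G^p_z(q,q)$ satisfies
\[
g = -\frac{1}{V(q) + z + \sum_{k=1}^K g_k},
\]
where $g_1, \dots, g_K$ are mutually independent copies of $g$, independent of $V(q)$. The quantity $w_z$ from \eref{wz} is the unique root of $Kw^2 + zw + 1 = 0$ with $\Im w \geq 0$, equivalently the deterministic fixed point of $w \mapsto -1/(z+Kw)$. A direct calculation from \eref{wz} gives $|w_z|^2 \leq 1/K + O(\eta)$ uniformly in $E \in \R$, with equality on the bulk $|E| \leq 2\sqrt K$; hence $K|w_z|^4 \leq 1/K + O(\beta^2) < 1$ uniformly in $E \in \R$ (using $K \geq 2$ and $\eta < \beta^2$). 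This universal contraction bound is the key structural fact driving everything.

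Write $\Delta := g - w_z$ and $\Delta_k := g_k - w_z$. Using $z + Kw_z = -1/w_z$, the self-consistent equation expands as
\[
\Delta = w_z^2 \Bigl(V(q) + \sum_{k=1}^K \Delta_k\Bigr) + O\Bigl(w_z^3 \bigl(V(q) + \textstyle\sum_k \Delta_k\bigr)^2\Bigr).
\]
If the error term were negligible, taking absolute square, expectation, and using independence of $V(q)$ and the $\Delta_k$ together with $\E V(q)^2 \leq (\E V(q)^4)^{1/2} \leq \beta^2$ would give $\E|\Delta|^2 \leq C|w_z|^4(\beta^2 + K\E|\Delta|^2)$. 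The contraction $K|w_z|^4 < 1 - c(K)$ would close this to $\E|\Delta|^2 \leq C'|w_z|^4 \beta^2$, and Markov's inequality would then produce a tail bound with exponent substantially better than $\beta^{1/20}$.

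The main obstacle is that the linearization breaks when the denominator $x := V(q) + z + \sum_k g_k$ is small: on that rare event $g = -1/x$ is so large that $\E|\Delta|^2$ may be infinite, so the clean $L^2$ argument is not available. My plan is a bootstrap-truncation scheme. Assume inductively the target bound $\P(|\Delta| \geq \tau |w_z|) \leq \tau|w_z|$ with $\tau := \alpha\beta^{1/20}$, and decompose the tail event using the good event $G := \bigcap_{k=1}^K \{|\Delta_k| \leq \tau|w_z|\} \cap \{|x| \geq r/|w_z|\}$ for a small denominator scale $r = r(\beta)$ to be optimized. On $G$ the Taylor expansion is valid and the linear computation above controls $\E[|\Delta|^2 \mathbf{1}_G]$. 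Off $G$ one uses (i) a union bound together with the inductive hypothesis for the $\Delta_k$-part; (ii) the regularity assumption \eref{regularity}, which with $s = \beta$ yields $\P(|V(q) - y| < r/|w_z|) \leq L r / (\beta |w_z|)$ uniformly in $y$ and in $(g_1,\dots,g_K)$, for the small-denominator part; and (iii) the deterministic bound $|g| \leq 1/\eta \leq \beta^{-2}$ to control the contribution of the bad event to the second moment.

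The hard step is balancing these losses. The regularity contributes a factor $r/(\beta |w_z|)$; on the small-denominator event $|g|^2$ can be as large as $|w_z|^2/r^2$; and the Markov conversion costs another factor. Optimizing $r$ in $\beta$ and closing the bootstrap against $\tau = \alpha \beta^{1/20}$ forces several fractional powers; the round exponent $1/20$ appears as a convenient envelope absorbing these losses together with the higher-order Taylor error, the possibly nonzero mean of $V(q)$ (which is $O(\beta)$ by \eref{fourthmoment}), and the $O(\beta^2)$ slack in the contraction $K|w_z|^4 < 1$. Uniformity in $E \in \R$ is automatic because the key bound $|w_z|^2 \leq 1/K + O(\eta)$ and the contraction it implies hold uniformly; the hypothesis $\eta < \beta^2$ only enters the crude deterministic bound $|g| \leq 1/\eta$ used on the bad event.
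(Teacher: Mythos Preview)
Your central claim---that the linearized self-consistent equation yields a second-moment recursion with contraction factor $K|w_z|^4 < 1$---is incorrect, and the gap is fatal for uniformity in $E$. Expanding $\E\bigl|V(q)+\sum_{k=1}^K \Delta_k\bigr|^2$ with $\E V(q)=0$ and independence gives
\[
\E|V|^2 + K\,\E|\Delta|^2 + K(K-1)\,|\E\Delta|^2,
\]
not $\E|V|^2 + K\,\E|\Delta|^2$: the $\Delta_k$ are i.i.d.\ but not centered, so the off-diagonal terms $\E[\bar\Delta_j\Delta_k]=|\E\Delta|^2$ survive. Bounding $|\E\Delta|^2\le\E|\Delta|^2$ turns the effective contraction factor into $K^2|w_z|^4$, and on the entire bulk $|E|\le 2\sqrt K$ one has $|w_E|^2=1/K$ exactly, hence $K^2|w_E|^4=1$: no contraction. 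Equivalently, $\phi_E'(w_E)=Kw_E^2$ has modulus $1$ throughout the elliptic regime, so the Euclidean linearization around $w_E$ is a rotation, not a contraction, and cannot close a second-moment bootstrap. Your truncated version on the good event $G$ has the same defect: with $|\Delta_k|\le\tau|w_z|$ one gets $\bigl|\sum_k\Delta_k\bigr|\le K\tau|w_z|$, so $\E[|\Delta|^2\mathbf 1_G]\lesssim|w_z|^4(\beta^2+K^2\tau^2|w_z|^2)$ and Chebyshev yields a bound $\gtrsim K^2|w_z|^4=1$, which is useless.

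One could try to control $\E\Delta$ separately from the first-moment recursion $(1-Kw_z^2)\E\Delta\approx 0$, but $1-Kw_z^2$ vanishes at the spectral edge $|E|=2\sqrt K$, so any such bound degenerates there and cannot be uniform in $E$. This is precisely why the paper abandons the Euclidean picture: it splits into elliptic, parabolic, and hyperbolic regimes according to the dynamics of $\phi_E$, and in the elliptic/parabolic regimes replaces $|g-w|^2$ by the hyperbolic distance $d(g,w)=|g-w|^2/(\Im g\,\Im w)$. For this functional the averaging map $(g_1,\dots,g_K)\mapsto K^{-1}\sum g_k$ is genuinely convex (Lemmas on \eref{convex} and \eref{rconvex}), which supplies the variance reduction that your Euclidean linearization lacks; the parabolic case near $|E|=2\sqrt K$ still requires a separate argument with a perturbed base point. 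The hyperbolic regime $|E|>2\sqrt K$ is handled by an entirely different mechanism (Cauchy projection and sub-Cauchy tail propagation). Your single unified $L^2$/bootstrap scheme does not see any of this structure and cannot produce a bound uniform across the edge.
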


The estimate \eref{green} implies the punctured Green's function converges in probability to the free punctured Green's function as the disorder vanishes.  For fixed $E + i \eta$, this convergence is straightforward.  The uniform algebraic rate is new.

\subsection{The self-consistent equation}

The proof of \tref{green} proceeds by estimating the solutions of the following self-consistent equation for the punctured Green's function.  Suppose $p \in \T$ has neighbors $p_0, ..., p_K \in \T$, $h = V(p)$, $g_0 = G^{p_0}_{E + i \eta}(p,p)$, and $g_k = G^p_{E + i \eta}(p_k,p_k)$ for $k = 1, ..., K$.  The resolvent identity implies the self-consistent equation
\begin{equation}
\label{e.selfconsistent}
g_0 = \frac{-1}{g_1 + \cdots + g_K + h + E + i \eta}.
\end{equation}

Since the random variables $g_0, .., g_K$ are identical and $g_1, ..., g_K, h$ are independent, the self-consistent equation can be interpreted as a nonlinear convolution equation for the law of the $g_k$.  Let $M_1(\H)$ and $M_1(\R)$ denote the spaces of Borel probability measures on upper half-plane and real line, respectively.  If $\mu \in M_1(\H)$ is the law of the $g_k$ and $\nu \in M_1(\R)$ is the law of $h$, then the self-consistent equation \eref{selfconsistent} is equivalent to the convolution equation
\begin{equation}
\label{e.convolution}
\mu = (\mu^{*K} * \nu * \delta_{E + i \eta}) \circ \psi
\end{equation}
where $\mu^{*1} = \mu$, $\mu^{*(k+1)} = \mu * \mu^{*k}$, and $\psi(z) = -1/z$.

Another suggestive way to rewrite the self-consistent equation is
\begin{equation}
\label{e.phi}
g_0 = \phi_E \left( \frac{g_1 + \cdots + g_K + h + i \eta}{K} \right),
\end{equation}
where
\begin{equation*}
\phi_E(z) = \frac{-1}{K z + E}
\end{equation*}
is a M\"obius transformation of the upper half-plane.  Since $w_E = \phi_E(w_E)$, the estimate \eref{green} amounts to proving $g_0$ is concentrated near a particular fixed point of $\phi_E$.  This suggests the dynamics of the M\"obius transformation $\phi_E$ should play some role.  The proof below breaks into cases according to whether the dynamics are elliptic, parabolic, or hyperbolic.

\subsection{Additional hypotheses}

The hypotheses \eref{fourthmoment} and \eref{regularity} together imply that $V(p)$ has a small expected value and its law has a density with quadratic tails.  It is convenient to make versions of these into explicit hypotheses, as this simplifies some technical details below without changing any theorem statements above.  Henceforth assume (without loss of generality) the potential is mean zero,
\begin{equation}
\label{e.meanzero}
\E V(p) = 0
\end{equation}
and has sub-Cauchy tails,
\begin{equation}
\label{e.subcauchy}
\P(|V(p) - x| < t) \leq \frac{L \beta t}{\beta^2 + x^2} \quad \mbox{for } x \in \R \mbox{ and } 0 < t < \beta.
\end{equation}

The proof of \tref{green} below relies only on the hypotheses \eref{fourthmoment}, \eref{meanzero}, and \eref{subcauchy}.  In fact, the elliptic and parabolic cases use only \eref{fourthmoment} and \eref{meanzero} while the hyperbolic case uses only \eref{subcauchy}.  The regularity condition \eref{regularity} is only needed to apply \tref{criterion}.

\subsection{Acknowledgements}

The second author was partially supported by NSF grant DMS-2137909.

\section{Hyperbolic distance}

The upper half-plane has a natural hyperbolic metric structure for which M\"obius transformations are automorphisms.  This was used extensively by Froese--Hasler--Spitzer \cite{MR2547606}, who considered the hyperbolic distance
\begin{equation*}
d(z, w) = \frac{|z - w|^2}{\Im z \Im w} \quad \mbox{for } z, w \in \H.
\end{equation*}
While the hyperbolic distance is a monotone function of the natural hyperbolic metric, it is not itself a metric.  The triangle inequality fails for $d$ in general.  However, the loss of the triangle inequality is compensated by a gain of convexity and smoothness.

M\"obius transformations of the complex plane which map the upper half-plane into itself are contractions with respect to hyperbolic distance.

\begin{lemma}
If $\phi : \C \to \C$ is M\"obius and $\phi(\H) \subseteq \H$, then
\begin{equation}
\label{e.contraction}
d(\phi(z),\phi(w)) \leq d(z, w).
\end{equation}
\end{lemma}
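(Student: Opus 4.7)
The plan is to deduce \eref{contraction} from the Schwarz--Pick lemma by recognizing $d$ as a monotone function of the standard hyperbolic distance on $\H$. Let $\rho$ denote the Poincar\'e (hyperbolic) distance on the upper half-plane. The classical closed form
\[
\cosh \rho(z,w) = 1 + \frac{|z - w|^2}{2\, \Im z\, \Im w}
\]
identifies $d$ as a smooth, strictly increasing function of $\rho$: explicitly,
\[
d(z,w) = 2 \bigl( \cosh \rho(z,w) - 1 \bigr).
\]

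Since $\phi$ is M\"obius and $\phi(\H) \subseteq \H$, the restriction $\phi|_{\H}$ is a holomorphic self-map of $\H$. The Schwarz--Pick lemma therefore supplies the hyperbolic contraction $\rho(\phi(z), \phi(w)) \leq \rho(z,w)$ for all $z, w \in \H$. Composing both sides with the monotone increasing map $t \mapsto 2(\cosh t - 1)$ on $[0, \infty)$ then yields \eref{contraction}.

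I do not expect any real obstacle: the argument is essentially a one-line translation between the two notions of distance, so the only substantive ingredient---beyond recalling the closed form for $\cosh \rho$---is Schwarz--Pick. A purely computational alternative is also at hand: writing $\phi(z) = (az+b)/(cz+d)$ with $ad-bc \neq 0$ and using the identities
\[
\phi(z) - \phi(w) = \frac{(ad-bc)(z-w)}{(cz+d)(cw+d)}, \qquad \Im \phi(z) = \frac{Q(z)}{|cz+d|^2}
\]
with $Q(z) := \Im\bigl[(az+b)\overline{(cz+d)}\bigr]$, one sees that \eref{contraction} is equivalent to the pointwise inequality $|ad-bc|^2\, \Im z\, \Im w \leq Q(z) Q(w)$, which holds because the hypothesis $\phi(\H) \subseteq \H$ forces $Q$ to be a positive semidefinite quadratic form on $\H$. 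The Schwarz--Pick reduction is shorter and more conceptual, so I would present it in that form and relegate the computational identity above to a remark.
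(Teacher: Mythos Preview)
Your proof is correct. The paper takes a different, more hands-on route: it observes that any M\"obius transformation sending $\H$ into itself factors as a composition of the elementary maps $z \mapsto -1/z$, $z \mapsto z + w$ with $w \in \H$, and $z \mapsto a z$ with $a > 0$, and then checks by direct computation that each of these three generators is a $d$-contraction. Your Schwarz--Pick reduction is shorter and more conceptual, and in fact proves more---it shows that \emph{any} holomorphic self-map of $\H$, not just a M\"obius one, contracts $d$. The paper's decomposition has the advantage of being entirely self-contained and of making transparent which building blocks are isometries (the automorphisms $z \mapsto -1/z$, $z \mapsto az$, and real translations) versus strict contractions (translation by $w$ with $\Im w > 0$); that distinction is not needed for the lemma itself but resonates with later arguments in the paper that exploit the effect of adding $i\eta$. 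Either proof would serve here.
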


\begin{proof}
Any such M\"obius transformation is a composition of maps of the form $z \mapsto -1/z$, $z \mapsto z + w$ for $w \in \H$, and $z \mapsto a z$ for $a > 0$.  By direct computation, each of these three types is a contraction with respect to hyperbolic distance.
\end{proof}

Hyperbolic distance is convex in each variable separately and quasiconvex in both variables jointly.

\begin{lemma}
For all $z_1, z_2, w_1, w_2 \in \H$,
\begin{equation}
\label{e.convex}
d\left( \frac{z_1+z_2}{2}, w_1 \right) \leq \frac{d(z_1, w_1) + d(z_2, w_1)}{2}
\end{equation}
and
\begin{equation}
\label{e.quasiconvex}
d\left( \frac{z_1+z_2}{2}, \frac{w_1 + w_2}{2} \right) \leq \max \{ d(z_1, w_1), d(z_2, w_2) \}.
\end{equation}
\end{lemma}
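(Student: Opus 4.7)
The plan is to reduce both inequalities to applications of the Cauchy--Schwarz inequality. The key algebraic observation is that taking midpoints in $\H$ averages the imaginary parts: writing $y_j = \Im z_j$, $v_j = \Im w_j$, and $a_j = z_j - w_j$, we have $\Im\frac{z_1+z_2}{2} = \frac{y_1+y_2}{2}$, and the factor of $1/2$ in the numerator cancels against the factors of $1/2$ in the denominator, so the hyperbolic distance between midpoints simplifies to an expression of the form $|a_1 + a_2|^2$ divided by a product of sums of imaginary parts. Both inequalities then become comparisons between $|a_1 + a_2|^2$ and $|a_1|^2, |a_2|^2$ weighted by the $y_j, v_j$.

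For the convexity inequality \eref{convex}, fix $w_1$ and set $v = \Im w_1$, $a = z_1 - w_1$, $b = z_2 - w_1$. After clearing the common factor of $v$, the inequality reduces to
\[ \frac{|a+b|^2}{y_1+y_2} \leq \frac{|a|^2}{y_1} + \frac{|b|^2}{y_2}, \]
which is the Cauchy--Schwarz inequality applied to the vectors $(a/\sqrt{y_1}, b/\sqrt{y_2})$ and $(\sqrt{y_1}, \sqrt{y_2})$ in $\C^2$.

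For the quasiconvexity inequality \eref{quasiconvex}, let $M = \max\{|a_1|^2/(y_1 v_1), |a_2|^2/(y_2 v_2)\}$, so that $|a_j| \leq \sqrt{M y_j v_j}$ for each $j$. The triangle inequality gives $|a_1 + a_2|^2 \leq M\bigl(\sqrt{y_1 v_1} + \sqrt{y_2 v_2}\bigr)^2$, and a second application of Cauchy--Schwarz yields $\bigl(\sqrt{y_1 v_1} + \sqrt{y_2 v_2}\bigr)^2 \leq (y_1 + y_2)(v_1 + v_2)$. Combining these bounds and dividing by $(y_1 + y_2)(v_1 + v_2)$ gives exactly $d\bigl(\frac{z_1+z_2}{2}, \frac{w_1+w_2}{2}\bigr) \leq M$, which is the required inequality.

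There is no real obstacle here: the argument uses only elementary inequalities, and the only care needed is to track the factors of $1/2$ arising from the arithmetic means in both the numerators and denominators so that they cancel cleanly.
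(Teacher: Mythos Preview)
Your proof is correct and essentially matches the paper's argument. For \eref{convex} the paper exhibits the difference as an explicit nonnegative square, which is exactly the equality-defect form of the Cauchy--Schwarz inequality you invoke; for \eref{quasiconvex} the paper also uses the triangle inequality on the numerator followed by $\sqrt{t}\sqrt{s}+\sqrt{1-t}\sqrt{1-s}\leq 1$, which is your Cauchy--Schwarz step in different notation.
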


\begin{proof}
For \eref{convex} compute
\begin{align*}
& d(z_1, w_1) + d(z_2, w_1) - 2 d(z_1 + z_2, 2 w_1) \\
& = \frac{| (z_1 - w_1) \Im z_2 - (z_2 - w_1) \Im z_1 |^2}{\Im z_1 \Im z_2 \Im (z_1 + z_2) \Im w_1} \geq 0
\end{align*}
and conclude by scaling invariance.  For \eref{quasiconvex}, compute
\begin{align*}
& \sqrt{d\left( z_1 + z_2, w_1 + w_2 \right)} \\
& = \frac{|z_1 + z_2 - w_1 - w_2|}{\sqrt{\Im z_1 + \Im z_2} \sqrt{\Im w_1 + \Im w_2}} \\
& \leq \frac{|z_1 - w_1| + |z_2 - w_2|}{\sqrt{\Im z_1 + \Im z_2} \sqrt{\Im w_1 + \Im w_2}} \\
& = \sqrt{d(z_1,w_1)} \sqrt t \sqrt s + \sqrt{d(z_2,w_2)} \sqrt{1-t} \sqrt{1-s}
\end{align*}
where
\begin{equation*}
t = \frac{\Im z_1}{\Im z_1 + \Im z_2}
\end{equation*}
and
\begin{equation*}
s = \frac{\Im w_1}{\Im w_1 + \Im w_2}
\end{equation*}
and conclude using $\sqrt t \sqrt s + \sqrt{1-t} \sqrt{1 - s} \leq 1$.
\end{proof}

The ratios of the imaginary parts of two points in the upper half-plane are bounded by their hyperbolic distance.

\begin{lemma}
If $z, w \in \H$, then
\begin{equation}
\label{e.iratio}
\frac{\Im z}{\Im w} + \frac{\Im w}{\Im z} \leq 2 + d(z,w).
\end{equation}
\end{lemma}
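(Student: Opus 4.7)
The plan is to reduce the inequality to the elementary fact that the imaginary part of a complex number is bounded by its modulus. First I would move the $2$ to the left-hand side and put the difference over a common denominator, writing
\begin{equation*}
\frac{\Im z}{\Im w} + \frac{\Im w}{\Im z} - 2 = \frac{(\Im z)^2 - 2 \Im z \, \Im w + (\Im w)^2}{\Im z \, \Im w} = \frac{(\Im z - \Im w)^2}{\Im z \, \Im w}.
\end{equation*}
So the claim \eref{iratio} becomes the inequality
\begin{equation*}
\frac{(\Im z - \Im w)^2}{\Im z \, \Im w} \leq \frac{|z - w|^2}{\Im z \, \Im w}.
\end{equation*}

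Next I would multiply both sides by the positive quantity $\Im z \, \Im w$ and reduce the problem to showing $(\Im z - \Im w)^2 \leq |z - w|^2$. This is immediate from the decomposition $|z-w|^2 = (\Re z - \Re w)^2 + (\Im z - \Im w)^2$, since the first summand is nonnegative. There is no substantial obstacle; the entire proof is a one-line algebraic identity followed by discarding the real-part contribution to $|z-w|^2$.
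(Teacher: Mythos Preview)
Your proof is correct and follows essentially the same approach as the paper: both reduce the inequality to $(\Im z - \Im w)^2 \leq |z-w|^2$ and then recognize $\frac{(\Im z - \Im w)^2}{\Im z\,\Im w} = \frac{\Im z}{\Im w} + \frac{\Im w}{\Im z} - 2$. The paper's version is just stated more tersely in a single displayed line.
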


\begin{proof}
Compute
\begin{equation*}
d(z,w) \geq \frac{(\Im z - \Im w)^2}{\Im z \Im w} = \frac{\Im z}{\Im w} + \frac{\Im w}{\Im z} - 2
\end{equation*}
and rearrange.
\end{proof}

The second moment of hyperbolic distance perturbed by a random real number is controlled by the fourth moment of the perturbation.

\begin{lemma}
There is a universal constant $C > 1$ such that, if $z, w \in \H$, $s \in [0,1]$, $X \in \R$ random, $\E X= 0$, and $\E X^4 \leq s^4 (\Im w)^4$, then
\begin{equation}
\label{e.perturb}
\E d(z + X, w)^2 \leq \left( 1 + C s^2 \right) d(z, w)^2 + C s^2.
\end{equation}
\end{lemma}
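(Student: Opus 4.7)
The plan is to exploit that $X$ is real, so $\Im(z+X) = \Im z$, and expand the hyperbolic distance exactly. Writing $\xi = \Re z - \Re w$, a direct computation gives
\begin{equation*}
d(z + X, w) = d(z,w) + \frac{2 \xi X + X^2}{\Im z \Im w}.
\end{equation*}
Squaring and taking expectations, the linear term in $X$ vanishes by $\E X = 0$, leaving
\begin{equation*}
\E d(z+X, w)^2 = d(z,w)^2 + \frac{2 d(z,w)\, \E X^2}{\Im z \Im w} + \frac{\E(2 \xi X + X^2)^2}{(\Im z)^2 (\Im w)^2}.
\end{equation*}
The task is then to bound the two error terms by $C s^2 (1 + d(z,w)^2)$.

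The key moment inputs are $\E X^2 \leq s^2 (\Im w)^2$ and $\E |X|^3 \leq s^3 (\Im w)^3$ (both follow from $\E X^4 \leq s^4 (\Im w)^4$ by H\"older), together with the hypothesis $\E X^4 \leq s^4 (\Im w)^4$ itself. The other structural ingredients are $\xi^2 \leq |z-w|^2 = d(z,w)\, \Im z \Im w$ and the ratio bound $\Im w / \Im z \leq 2 + d(z,w)$ from \eref{iratio}. Applied to the first error term,
\begin{equation*}
\frac{2 d(z,w) \E X^2}{\Im z \Im w} \leq 2 s^2 d(z,w) \cdot \frac{\Im w}{\Im z} \leq 2 s^2 d(z,w) (2 + d(z,w)),
\end{equation*}
which after using $d(z,w) \leq \tfrac12 (1 + d(z,w)^2)$ fits into the desired form.

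For the quadratic error term, expand $(2 \xi X + X^2)^2 = 4 \xi^2 X^2 + 4 \xi X^3 + X^4$ and estimate each piece separately. The $\xi^2 X^2$ term is bounded via $\xi^2 \leq d(z,w) \Im z \Im w$ and $\E X^2 \leq s^2 (\Im w)^2$, giving $4 s^2 d(z,w) (\Im w / \Im z)$, which is then controlled by \eref{iratio}. The cross term uses $|\xi| \leq \sqrt{d(z,w) \Im z \Im w}$ and $\E |X|^3 \leq s^3 (\Im w)^3$, yielding $O(s^2) \cdot \sqrt{d(z,w)}(2 + d(z,w))^{3/2}$ after \eref{iratio} and $s^3 \leq s^2$; a routine polynomial bound absorbs this into $C s^2 (1 + d(z,w)^2)$. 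The pure $X^4$ contribution is handled by $(\Im w / \Im z)^2 \leq (2 + d(z,w))^2$.

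The main, and essentially only, obstacle is the cross term $\E X^3$: we have no sign information or third-moment hypothesis, so it must be controlled by Cauchy--Schwarz or H\"older from the given second and fourth moments, and the resulting $s^3 \sqrt{d(z,w)}(2 + d(z,w))^{3/2}$ must be shown to fit under the allowed envelope $C s^2 (1 + d(z,w)^2)$ using $s \leq 1$. Everything else is a direct bookkeeping of powers, and combining the three contributions with a large enough universal $C$ yields \eref{perturb}.
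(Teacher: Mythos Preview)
Your proposal is correct and follows essentially the same approach as the paper: both expand $|z+X-w|^4/(\Im z\,\Im w)^2$, use $\E X=0$ to kill the linear term, bound $\xi^2\le|z-w|^2$ (equivalently $|\Re u|\le|u|$), and finish with the moment hypothesis together with the ratio bound \eref{iratio}. The only cosmetic difference is that the paper absorbs the cubic contribution $|u|\,\E|X|^3$ into the $|u|^2\E X^2$ and $\E X^4$ terms early via H\"older/AM--GM, yielding two error terms instead of your three, whereas you carry the $\xi\,\E X^3$ piece separately and dispose of it at the end with the elementary bound $\sqrt{t}(2+t)^{3/2}\le C(1+t^2)$.
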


\begin{proof}
Throughout the proof use $C > 1$ to denote a universal constant that may differ in each instance.
For $u \in \C$, compute
\begin{equation*}
|X - u|^4 = |u|^4 - 4 |u|^2 (\Re u) X + 2 |u|^2 X^2 + 4 (\Re u)^2 X^2 - 4 (\Re u) X^3 + X^4.
\end{equation*}
Take the expectation and use $\E X = 0$ and $|\Re u| \leq |u|$ to obtain
\begin{equation*}
\E |X - u|^4 \leq |u|^4 + C |u|^2 \E X^2 + C |u| \E |X|^3 + \E X^4.
\end{equation*}
Apply H\"older's inequality to obtain
\begin{equation*}
\E |X - u|^4 \leq |u|^4 + C |u|^2 \E X^2 + C \E X^4.
\end{equation*}
Use the definition of $d(z,w)$ to obtain
\begin{equation*}
\E d(z + X,w)^2
= \frac{\E |X + z - w|^4}{(\Im w \Im z)^2}
\leq d(z,w)^2 + C d(z,w) \frac{\E X^2}{\Im w \Im z} + C \frac{\E X^4}{(\Im w \Im z)^2}.
\end{equation*}
Use the hypothesis $\E X^4 \leq s^4 (\Im w)^4$ and the ratio bound \eref{iratio} to obtain
\begin{equation*}
\E d(z + X, w)^2 \leq d(z,w)^2 + C s^2 d(z,w) (2 + d(z,w)) + C s^4 (2 + d(z,w))^2.
\end{equation*}
Use $s^4 \leq s^2$ and expand to obtain
\begin{equation*}
\E d(z + X, w)^2 \leq d(z,w)^2 + C s^2 d(z,w)^2  + C s^2 d(z,w) + C s^2.
\end{equation*}
Conclude \eref{perturb} using Cauchy--Schwarz.
\end{proof}

A crude version of the triangle inequality holds for hyperbolic distance.

\begin{lemma}
If $z, w_1, w_2\in \H$, then
\begin{equation}
\label{e.almosttriangle}
\sqrt{d(z, w_1)} \leq (1 + \sqrt{d(w_2,w_1)}) \sqrt{d(z,w_2)} + \sqrt 2 \sqrt{d(w_2,w_1)}.
\end{equation}
\end{lemma}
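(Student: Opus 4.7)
My plan is to deduce \eref{almosttriangle} from the triangle inequality for the Poincar\'e hyperbolic metric $\rho$ on $\H$, exploiting the classical identity
\begin{equation*}
d(z, w) = 4 \sinh^2(\rho(z, w)/2).
\end{equation*}
The paper has introduced $d$ but not $\rho$, so I would first dispatch the identity: by \eref{contraction} applied to both a M\"obius automorphism of $\H$ and its inverse, $\operatorname{PSL}(2, \R)$ acts on $\H$ by $d$-isometries. Since it acts transitively, it suffices to verify the identity on the vertical geodesic $\{ir : r > 0\}$, where $d(i, ri) = (r - 1)^2/r$ and $\rho(i, ri) = |\log r|$, and $(r - 1)^2/r = 4 \sinh^2((\log r)/2)$ is immediate.

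With this identification in hand, I apply the ordinary triangle inequality $\rho(z, w_1) \leq \rho(z, w_2) + \rho(w_2, w_1)$ together with monotonicity of $\sinh$ and the hyperbolic addition formula $\sinh(A + B) = \sinh A \cosh B + \cosh A \sinh B$. Substituting $\sinh(\rho/2) = \sqrt{d}/2$ and $\cosh(\rho/2) = \sqrt{1 + d/4}$ yields
\begin{equation*}
\sqrt{d(z, w_1)} \leq \sqrt{d(z, w_2)} \sqrt{1 + d(w_2, w_1)/4} + \sqrt{d(w_2, w_1)} \sqrt{1 + d(z, w_2)/4}.
\end{equation*}
The elementary inequality $\sqrt{1 + x/4} \leq 1 + \sqrt{x}/2$ (squaring gives $1 + x/4 \leq 1 + \sqrt{x} + x/4$) then collapses this to
\begin{equation*}
\sqrt{d(z, w_1)} \leq (1 + \sqrt{d(w_2, w_1)}) \sqrt{d(z, w_2)} + \sqrt{d(w_2, w_1)},
\end{equation*}
which is slightly stronger than \eref{almosttriangle} since $1 \leq \sqrt{2}$.

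The main non-routine step is the identification $d = 4 \sinh^2(\rho/2)$; once it is recorded, the rest reduces to the standard hyperbolic triangle inequality plus one elementary scalar estimate. A purely pedestrian alternative is to combine $|z - w_1| \leq |z - w_2| + |w_2 - w_1|$ with the sharp form $\sqrt{\Im w/\Im z} \leq 1 + \sqrt{d(z, w)}$ of \eref{iratio} applied to the ratios $\Im w_2/\Im w_1$ and $\Im w_2/\Im z$, but a direct check shows that this elementary route produces an additional, uncancellable cross term $\sqrt{d(z, w_2) d(w_2, w_1)}$ beyond what the target allows. The hyperbolic path cleanly avoids this loss.
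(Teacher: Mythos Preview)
Your main argument is correct and in fact yields the slightly sharper inequality with constant $1$ in place of $\sqrt 2$. The identification $d = 4\sinh^2(\rho/2)$ is the standard $\cosh\rho = 1 + d/2$ rewritten, and your reduction to the vertical geodesic via \eref{contraction} is a clean way to verify it within the paper's setup. The $\sinh$ addition formula and the elementary bound $\sqrt{1+x/4}\le 1+\sqrt x/2$ then do exactly what you claim.

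The paper, however, takes precisely the pedestrian route you dismiss. It writes $\sqrt{d(z,w_1)} = |z-w_1|/\sqrt{\Im z\,\Im w_1}$, applies the Euclidean triangle inequality, and handles only the second summand $|w_2-w_1|/\sqrt{\Im z\,\Im w_1}$ with a ratio correction: this equals $\sqrt{d(w_2,w_1)}\sqrt{\Im w_2/\Im z}$, which by \eref{iratio} is at most $\sqrt{d(w_2,w_1)}\sqrt{2+d(z,w_2)}\le \sqrt{d(w_2,w_1)}(\sqrt 2 + \sqrt{d(z,w_2)})$. Your side remark misdiagnoses why a naive version of this fails: the ``sharp form'' $\sqrt{\Im w/\Im z}\le 1+\sqrt{d(z,w)}$ you invoke is actually false for small $d$ (take $d=0$), and the cross term $\sqrt{d(z,w_2)\,d(w_2,w_1)}$ is not ``beyond what the target allows'' --- it is exactly the cross term in $(1+\sqrt{d(w_2,w_1)})\sqrt{d(z,w_2)}$. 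So the elementary approach works fine once organized correctly; your hyperbolic route buys the improved constant and a more conceptual explanation, at the cost of importing $\rho$.
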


\begin{proof}
Compute
\begin{align*}
& \sqrt{ d(z, w_1) } \\
& \leq \frac{|z - w_2| + |w_2 - w_1|}{\sqrt{\Im z \Im w_1}} \\
&  = \sqrt{ d(z, w_2)} + \frac{|w_2 - w_1|}{\sqrt{\Im z \Im w_1}} \\
&  \leq \sqrt{ d(z, w_2)} + \sqrt{ d(w_2,w_1) } \sqrt{ 2 + d(z, w_1)) } \\
&  \leq (1 + \sqrt{ d(w_2,w_1) })\sqrt{ d(z, w_2)} + \sqrt 2 \sqrt{ d(w_2, w_1) },
\end{align*}
using the triangle inequality and \eref{iratio}. 
\end{proof}

The convexity of hyperbolic distance can be quantified.  This essentially comes from the level sets of $d(\cdot, w)$ being nested disks.

\begin{lemma}
If $z_1, ..., z_K, w \in \H$, then
\begin{equation}
\label{e.rconvex}
d\left( \frac{z_1 + \cdots + z_K}{K}, w \right) \leq  \left( 1 - \frac{\sum_{j < k} |z_j - z_k|^2}{K \sum_k |z_k - w|^2} \right) \max_k d(z_k, w)
\end{equation}
\end{lemma}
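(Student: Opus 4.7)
The plan is to prove the inequality by a direct algebraic computation, extracting the bracketed factor exactly and then bounding the remaining ratio by the max via a mediant-type inequality. The crucial algebraic input is the Lagrange-style identity
\[
\Bigl| \sum_k a_k \Bigr|^2 = K \sum_k |a_k|^2 - \sum_{j < k} |a_j - a_k|^2,
\]
which is exactly what produces the bracketed term in \eref{rconvex}.

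First I would write out the left-hand side. Since the imaginary part of the average equals the average of the imaginary parts,
\[
d\!\left( \frac{z_1 + \cdots + z_K}{K}, w \right) = \frac{\bigl|\sum_k (z_k - w)\bigr|^2}{K \, \Im w \sum_k \Im z_k}.
\]
Applying the identity above with $a_k = z_k - w$ and noting that $a_j - a_k = z_j - z_k$ gives
\[
d\!\left( \frac{z_1 + \cdots + z_K}{K}, w \right) = \frac{\sum_k |z_k - w|^2}{\Im w \sum_k \Im z_k} \left( 1 - \frac{\sum_{j<k} |z_j - z_k|^2}{K \sum_k |z_k - w|^2} \right).
\]
The bracketed factor is non-negative, because the same identity shows it is equal to $\bigl|\sum_k (z_k - w)\bigr|^2 / (K \sum_k |z_k - w|^2) \geq 0$.

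To finish, I would rewrite the leading ratio as a convex combination of the $d(z_k, w)$. Using $|z_k - w|^2 = d(z_k, w) \Im z_k \Im w$, the leading factor becomes
\[
\frac{\sum_k \Im z_k \cdot d(z_k, w)}{\sum_k \Im z_k},
\]
a weighted average with positive weights $\Im z_k$, which is at most $\max_k d(z_k, w)$. Multiplying this bound by the non-negative bracketed factor yields \eref{rconvex}. There is no real obstacle here; the argument is an identity followed by a mediant bound. The only subtlety is verifying non-negativity of the bracketed factor before pulling the max through, and this is automatic from the Lagrange identity itself.
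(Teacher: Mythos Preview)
Your proof is correct and is essentially the same as the paper's: both start from $d(\bar z,w)=\frac{|\sum_k (z_k-w)|^2}{K\,\Im w\sum_k\Im z_k}$, use the Lagrange identity $|\sum_k a_k|^2 = K\sum_k|a_k|^2 - \sum_{j<k}|a_j-a_k|^2$ to produce the bracketed factor, and bound the remaining ratio by $\max_k d(z_k,w)$ via $|z_k-w|^2 = d(z_k,w)\,\Im z_k\,\Im w$. The only cosmetic difference is the order: the paper first rewrites the denominator as $K\sum_k|z_k-w|^2 d(z_k,w)^{-1}$ and pulls out the max before applying the Lagrange identity, whereas you factor out the bracket first and then recognize the leading ratio as a weighted average.
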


\begin{proof}
Compute
\begin{align*}
& d\left(\frac{z_1 + \cdots + z_K}{K}, w \right) \\
& = \frac{|\sum_k (z_k - w)|^2}{K \Im w \sum_k \Im z_k} \\
& = \frac{|\sum_k (z_k - w)|^2}{K \sum_k |z_k - w|^2 d(z_k,w)^{-1}} \\
& \leq \frac{|\sum_k (z_k - w)|^2}{K \sum_k |z_k - w|^2} \max_k d(z_k, w) \\
& = \left( 1 - \frac{\sum_{j < k} |z_j - z_k|^2}{K \sum_k |z_k - w|^2} \right) \max_k d(z_k, w)
\end{align*}
using the triangle inequality and algebra.
\end{proof}

\section{Fixed point iteration}

The following lemma shows the convolution equation \eref{convolution} has a unique solution.  Moreover, the associated fixed-point iteration has a globally stable fixed point.

\begin{lemma}
\label{l.iteration}
Suppose $E \in \R$, $\eta > 0$, $\nu \in M_1(\R)$, $\psi(z) = -1/z$, and $T : M_1(\H) \to M_1(\H)$ is defined by
\begin{equation*}
T \mu = (\mu^{*K} * \nu * \delta_{E + i \eta}) \circ \psi.
\end{equation*}
For any $\mu \in M_1(\H)$, the fixed-point iteration $k \mapsto T^k \mu$ converges weakly to the unique fixed point of $T$.
\end{lemma}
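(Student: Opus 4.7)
The plan is to combine compactness to get existence of a fixed point with a coupling argument for uniqueness and convergence of the iterates.

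The first observation is that $T$ regularizes: for any $\mu \in M_1(\H)$, a sample $X \sim T\mu$ has the form $X = -1/Z$ where $Z = g_1 + \cdots + g_K + h + E + i\eta$ satisfies $\Im Z \geq \eta$, so $|X| \leq 1/\eta$. Hence $T\mu$ is supported in the compact set $\{z \in \overline{\H} : |z| \leq 1/\eta\}$, which makes the sequence $(T^n \mu)_{n \geq 1}$ tight. Convolution of measures is weakly continuous and $\psi$ is continuous on $\{z : \Im z \geq \eta\}$, so $T$ is weakly continuous on $M_1(\H)$. Any weak subsequential limit of $T^n \mu$ is therefore a fixed point of $T$, so at least one fixed point exists.

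For uniqueness and attraction, I would use a coupling argument that exploits the fact that $\psi$ is an isometry of hyperbolic distance while $z \mapsto z + i\eta$ is a strict contraction. Given $\mu, \tilde\mu \in M_1(\H)$, couple them optimally and take $(g_k, \tilde g_k)$ i.i.d.\ from the coupling for $k = 1, \ldots, K$, together with a shared $h \sim \nu$. Setting $Z = g_1 + \cdots + g_K + h + E + i\eta$ and $\tilde Z$ analogously, the isometry of $\psi$ on $\H$ gives $d(\psi(Z), \psi(\tilde Z)) = d(Z, \tilde Z)$. Since real translations and positive scalings preserve $d$, iterating the quasiconvexity inequality \eref{quasiconvex} yields
\[
d(Z - i\eta, \tilde Z - i\eta) = d\!\left( \frac{g_1 + \cdots + g_K}{K}, \frac{\tilde g_1 + \cdots + \tilde g_K}{K} \right) \leq \max_k d(g_k, \tilde g_k),
\]
and adding $i\eta$ back multiplies the distance by the strictly smaller factor $(\Im Z - \eta)(\Im \tilde Z - \eta)/(\Im Z \, \Im \tilde Z)$. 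On the bounded support $\{|z| \leq 1/\eta\}$ obtained after one iteration, $\Im Z \leq K/\eta + \eta$, so this factor admits a uniform upper bound $\rho < 1$ depending only on $(K, \eta)$.

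The main obstacle is promoting this pointwise inequality to a genuine strict contraction on probability measures. I would introduce a Wasserstein-type quantity such as $W(\mu, \tilde\mu) = \inf_\pi \E \min(d(X, \tilde X), 1)$ on measures supported in $\{|z| \leq 1/\eta\}$ and verify that it metrizes the weak topology there while being strictly contracted by $T$. The Banach fixed point theorem would then give a unique fixed point and geometric convergence of $T^n \mu$ in $W$, implying the claimed weak convergence. The delicate point is verifying that $W$ metrizes the weak topology given that $d$ itself is not a metric; an alternative route is to track $\E d(X_n, X^*)$ directly along a coupling of $T^n \mu$ with a fixed point $\mu^*$ and show it tends to zero using the almost-triangle inequality \eref{almosttriangle} to close up the argument.
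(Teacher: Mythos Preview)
Your coupling isolates the right local ingredients and matches the paper's: quasiconvexity \eref{quasiconvex} bounds the averaging step by a $\max$, and translation by $i\eta$ contracts $d$ by a factor uniformly at most $\rho = (K/(K+\eta^2))^2 < 1$ once the iterates sit in $\{|z|\le \eta^{-1}\}$.

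The gap is in promoting the pointwise bound to a contraction on measures. Your candidate $W(\mu,\tilde\mu)=\inf_\pi \E\min(d(X,\tilde X),1)$, and likewise the alternative of tracking $\E\, d(X_n,X^*)$, runs into the obstruction that the pointwise inequality reads
\[
d(\psi(Z),\psi(\tilde Z)) \;\le\; \rho\, \max_{1\le k\le K} d(g_k,\tilde g_k),
\]
with a $\max$ on the right rather than an average. Taking expectations costs a factor of $K$ (via $\E[\max_k] \le \sum_k \E[\,\cdot\,]$, and this is sharp up to constants for i.i.d.\ samples), and since $\rho$ is close to $1$ the product $\rho K$ exceeds $1$; no contraction survives. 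The truncation $\min(\cdot,1)$ does not help, because $\min(\rho x,1)\le \rho\min(x,1)$ fails for $x>1/\rho$.

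The paper's fix is to use the $\infty$-Wasserstein distance $W$ associated to $d$ (well-defined since $d$ is a monotone function of the hyperbolic metric, so $W$ still generates a topology at least as strong as weak convergence). For $W$ the $\max$ from quasiconvexity is exactly right: a near-optimal coupling has $d(g_k,\tilde g_k)\le W(\mu,\tilde\mu)+\varepsilon$ almost surely, so the pointwise bound yields $W(T^2\mu_1,T^2\mu_2)\le \rho\, W(T\mu_1,T\mu_2)$ directly. A companion a priori bound $W(T^2\mu_1,T^2\mu_2)\le 4K^2\eta^{-4}$, coming from the support constraint, makes $W$ finite after two iterations, after which geometric decay gives existence, uniqueness, and convergence in one stroke. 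Your separate compactness argument for existence then becomes unnecessary---which is fortunate, since as written it has its own gap: if $T^{n_k}\mu\to\mu^*$ weakly, continuity of $T$ gives $T^{n_k+1}\mu\to T\mu^*$, but nothing forces this shifted subsequence to have the same limit $\mu^*$.
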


\begin{proof}
Let $W(\mu_1, \mu_2)$ denote the $\infty$-Wasserstein distance between $\mu_1, \mu_2 \in M_1(\H)$ with respect to hyperbolic distance $d$.   Even though $d$ is not a metric, the definition of $W$ still makes sense because $d$ is a monotone function of a metric.  Since the topology on $M_1(\H)$ generated by $W$ is stronger than the weak topology, it suffices to prove convergence of fixed-point iteration with respect to $W$.

Using the contraction \eref{contraction} of M\"obius transformations with respect to hyperbolic distance, compute
\begin{equation*}
W(T^2\mu_1, T^2\mu_2) \leq W((T\mu_1)^{* K} * \nu * \delta_{E + i\eta}, (T\mu_2)^{* K} * \nu * \delta_{E + i\eta}).
\end{equation*}
Using the invariance of hyperbolic distance with respect to translation by real numbers, obtain
\begin{equation*}
W(T^2\mu_1, T^2\mu_2) \leq W((T\mu_1)^{* K} * \delta_{i \eta}, (T\mu_2)^{* K} * \delta_{i \eta}).
\end{equation*}
Using $\eta > 0$ and the definition of $T$, observe
\begin{equation*}
\operatorname{supp} T \mu_1, \operatorname{supp} T \mu_2  \subseteq  \{ z \in \H : |z| \leq \eta^{-1} \}.
\end{equation*}
Since $d(z + i \eta, w + i \eta) \leq 4 R^2 \eta^{-2}$ whenever $z, w \in \H$ and $|z|, |w| \leq R$, obtain
\begin{equation*}
W((T \mu_1)^{* K} * \delta_{i \eta}, (T \mu_2)^{* K} * \delta_{i \eta}) \leq 4 K^2 \eta^{-4}.
\end{equation*}
Since $d(z + i \eta, w + i \eta) \leq (\frac{R}{R + \eta})^2 d(z,w)$ whenever $z, w \in \H$ and $|z|, |w| \leq R$, obtain
\begin{equation*}
W((T\mu_1)^{* K} * \delta_{i \eta}, (T\mu_2)^{* K} * \delta_{i \eta}) \leq \left( \frac{K}{K+\eta^2} \right)^2 W((T \mu_1)^{* K}, (T \mu_2)^{* K}).
\end{equation*}
Using the quasiconvexity \eref{quasiconvex} of hyperbolic distance, obtain
\begin{equation*}
W((T \mu_1)^{* K}, (T \mu_2)^{* K}) \leq W( T \mu_1, T \mu_2 ).
\end{equation*}

Chaining the above inequalities together gives
\begin{equation*}
W(T^2 \mu_1, T^2 \mu_2) \leq \min \left\{ \frac{4 K^2}{\eta^4}, \left( \frac{K}{K+\eta^2} \right)^2 W(T \mu_1, T \mu_2) \right\}
\end{equation*}
for any $\mu_1, \mu_2 \in M_1(\H)$.  This implies convergence of the fixed-point iteration.  The distance $W(T^k \mu_1, T^k \mu_2)$ becomes bounded after the first step and decays geometrically afterwards.
\end{proof}

To prove the hyperbolic case of \tref{green}, it is useful to project the convolution equation \eref{convolution} onto the real line.  A projection from $M_1(\H)$ onto $M_1(\R)$ can be defined using Cauchy measure.  Let $\sigma_z \in M_1(\R)$ denote the Cauchy measure with barycenter $z \in \H$ defined by
\begin{equation*}
d \sigma_{a + i b}(x) = \frac{1}{\pi} \frac{b}{b^2 + (x - a)^2} \,dx.
\end{equation*}
Let $\sigma_\mu \in M_1(\R)$ denote the Cauchy projection of $\mu \in M_1(\H)$ defined by
\begin{equation}
\label{e.projection}
\sigma_\mu = \int_\H \sigma_z \,d \mu(z).
\end{equation}

The Cauchy measure $\sigma_z$ is the hitting measure of a Brownian motion in the complex plane started at $z \in \H$ and stopped upon hitting the real line. Thus, the Cauchy projection $\sigma_\mu$ can be interpreted as the hitting measure of a Brownian motion whose starting point is randomly sampled from the distribution $\mu$. By conformal invariance of Brownian motion, the Cauchy projection commutes with convolution and automorphisms of $\H$.  The following lemma and its elementary proof encode these facts.

\begin{lemma}\label{l.projection}
If $\mu_1, \mu_2 \in M_1(\H)$ and $\phi$ is an automorphism of $\H$, then
\begin{equation*}
\sigma_{\mu_1 * \mu_2} = \sigma_{\mu_1} * \sigma_{\mu_2},
\end{equation*}
and
\begin{equation*}
\sigma_{\mu \circ \phi} = \sigma_\mu \circ \phi.
\end{equation*}
\end{lemma}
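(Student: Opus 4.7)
The plan is to reduce both identities to the case of Dirac masses on $\H$ and then lift them to arbitrary measures by Fubini. Specifically, I would first establish the two building-block facts
\begin{equation*}
\sigma_{z_1} * \sigma_{z_2} = \sigma_{z_1 + z_2}
\quad \text{and} \quad
\sigma_z \circ \phi = \sigma_{\phi(z)}
\end{equation*}
for $z_1, z_2, z \in \H$ and $\phi$ an automorphism of $\H$.

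For stability, the quickest route is characteristic functions: the characteristic function of $\sigma_{a + ib}$ is $t \mapsto e^{i a t - b |t|}$, so the product over $z_1, z_2$ matches that of $\sigma_{z_1 + z_2}$. Equivalently, one can argue via Brownian motion, using that the hitting distribution on $\R$ of an independent sum of Brownian motions started at $z_1$ and $z_2$ equals that of a single Brownian motion started at $z_1 + z_2$. For equivariance, I would decompose $\phi$ into the three generating types $z \mapsto z + a$ with $a \in \R$, $z \mapsto \lambda z$ with $\lambda > 0$, and $z \mapsto -1/z$, and verify each directly from the explicit density of $\sigma_z$; the first two are immediate rescalings and translations of the density, and the third is the substitution $y = -1/x$. (A cleaner alternative is the conformal invariance of Brownian motion: applying $\phi$ to a Brownian motion started at $z$ gives a time-changed Brownian motion started at $\phi(z)$, and a time change does not affect the hitting distribution on $\R$.)

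With these two facts in hand, both identities follow by testing against a bounded continuous $f : \R \to \R$ and applying Fubini. For the convolution identity,
\begin{align*}
\int f \, d\sigma_{\mu_1 * \mu_2}
& = \int_\H \int_\H \int_\R f \, d\sigma_{z_1 + z_2} \, d\mu_1(z_1) \, d\mu_2(z_2) \\
& = \int_\H \int_\H \int_\R f \, d(\sigma_{z_1} * \sigma_{z_2}) \, d\mu_1(z_1) \, d\mu_2(z_2) \\
& = \int_\R f \, d(\sigma_{\mu_1} * \sigma_{\mu_2}),
\end{align*}
where the first line uses the definition of the Cauchy projection together with $\int \sigma_z \, d(\mu_1 * \mu_2)(z) = \int\int \sigma_{z_1 + z_2} \, d\mu_1 \, d\mu_2$, the second invokes Cauchy stability, and the third reorganizes the triple integral. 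The equivariance identity is handled in exactly the same way with equivariance replacing stability. The whole argument is bookkeeping; the only mild points to watch are the paper's convention that $\rho \circ \phi$ denotes the pushforward $\phi_* \rho$, and that $\phi$ is extended continuously to $\R \cup \{\infty\}$ with $\phi^{-1}(\infty)$ carrying zero Cauchy mass. There is no substantive obstacle.
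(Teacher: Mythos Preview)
Your proposal is correct and follows the same overall strategy as the paper: reduce to Dirac masses on $\H$ by establishing $\sigma_{z_1} * \sigma_{z_2} = \sigma_{z_1+z_2}$ and the equivariance of $z \mapsto \sigma_z$ under automorphisms, then extend to general $\mu$ via the definition \eref{projection} and Fubini. The difference is only in how the building blocks are verified. The paper uses a single unified tool: the characterization of $\sigma_z$ as the unique measure on $\R$ satisfying $\int f \, d\sigma_z = f(z)$ for all bounded holomorphic $f$ on $\overline\H$ (a one-line residue computation), from which both identities drop out immediately. Your route via characteristic functions for stability and generator decomposition (or conformal invariance of Brownian motion) for equivariance is more hands-on but equally valid; the paper's holomorphic characterization is slicker and avoids the case analysis. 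One convention point to correct: the paper's computation yields $\sigma_z \circ \phi = \sigma_{\phi^{-1}(z)}$, not $\sigma_{\phi(z)}$, because its usage is $\int f \, d(\rho \circ \phi) = \int (f \circ \phi^{-1}) \, d\rho$, i.e.\ $\rho \circ \phi = (\phi^{-1})_* \rho$ rather than $\phi_* \rho$ as you state. This does not affect the lemma, since both sides carry the same $\circ \phi$, but your building-block formula and the stated convention should be adjusted to match.
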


\begin{proof}
Since
\begin{equation*}
d \sigma_z(x) = \frac{1}{2 \pi} \left( \frac{1}{x - z} - \frac{1}{x - \bar z} \right) dx,
\end{equation*}
the residue theorem implies
\begin{equation*}
\int f(x) \,d \sigma_z(x) = f(z)
\end{equation*}
whenever $f : \overline \H \to \C$ is bounded and holomorphic.  By the density of such $f$ this identity characterizes $\sigma_z \in M_1(\R)$.  Compute
\begin{equation*}
\int f(x) \,d (\sigma_z * \sigma_w)(x) = \int \int f(x + y) \,d \sigma_z(x) \,d \sigma_w(y) = f(z + w)
\end{equation*}
and deduce
\begin{equation*}
\sigma_z * \sigma_w = \sigma_{z+w}.
\end{equation*}
Compute
\begin{equation*}
\int f(x) \,d (\sigma_z \circ \phi)(x) = \int f(\phi^{-1}(x)) \,d \sigma_z(x) = f(\phi^{-1}(z))
\end{equation*}
and deduce
\begin{equation*}
\sigma_z \circ \phi = \sigma_{\phi^{-1}(z)}.
\end{equation*}
Conclude the lemma by inserting the above identities into the definition \eref{projection} of Cauchy projection.
\end{proof}

\section{The elliptic and parabolic cases}

This section covers the elliptic and parabolic cases of \tref{green}.  It is convenient to impose the following standing hypotheses throughout the section.  Assume $0 < \beta < 1$, $0 < \eta < \beta^2$, and $|E| \leq K + 1$.  Assume $g_0, ..., g_K \in \H$ and $h \in \R$ are the random variables solving the self-consistent equation \eref{selfconsistent} and its dynamical reformulation \eref{phi} defined in the introduction.  Finally, $C > 1 > c > 0$ denote constants that depend only on $(K,L)$ but may differ in each instance.

The goal is to prove $g_0$ is typically close to $w_E$ in a suitably strong sense.  The starting point is the same as Froese--Hasler--Spitzer \cite{MR2547606} who used the second moment
\begin{equation*}
\E d(g_0, w)^2
\end{equation*}
for some $w \in \H$ as an approximate Lyapunov functional for the dynamics of \eref{phi}.

The first lemma shows this second moment is qualitatively finite.

\begin{lemma}
For all $w \in \H$,
\begin{equation*}
\E d(g_0, w)^2 < \infty.
\end{equation*}
\end{lemma}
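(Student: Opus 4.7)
The plan is to treat this as a qualitative finiteness statement: the bound need not be uniform in $\eta$ or $\beta$, so the crudest operator-norm estimates suffice.

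First I would observe that each $g_k = G^{p_k}_{E+i\eta}(\cdot,\cdot)$ is a diagonal matrix entry of the resolvent of a self-adjoint operator shifted by $i\eta$, namely $(\pi_{p_k}(H - E - i\eta)\pi_{p_k}^*)^{-1}$. Since this resolvent has operator norm at most $\eta^{-1}$, the deterministic bounds $|g_0|, |g_1|, \ldots, |g_K| \le \eta^{-1}$ hold almost surely. In particular, the numerator $|g_0 - w|^4$ in the definition of $d(g_0,w)^2$ is bounded deterministically by $(\eta^{-1} + |w|)^4$.

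Next I would use the self-consistent equation \eref{selfconsistent} to control $\Im g_0$ from below. Writing $g_0 = -1/\zeta$ with $\zeta = g_1 + \cdots + g_K + h + E + i\eta$, I have
\begin{equation*}
\Im g_0 = \frac{\Im \zeta}{|\zeta|^2} \ge \frac{\eta}{|\zeta|^2},
\end{equation*}
while the triangle inequality together with the previous step gives $|\zeta| \le K\eta^{-1} + |h| + |E| + \eta$. Inserting these bounds into the definition of hyperbolic distance and treating $\eta$, $K$, $E$, $w$ as fixed constants yields
\begin{equation*}
d(g_0, w)^2 \le C_{\eta,K,E,w}\,(1 + h^4),
\end{equation*}
with $C_{\eta,K,E,w}$ finite and deterministic. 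The fourth moment hypothesis \eref{fourthmoment} then gives $\E d(g_0, w)^2 < \infty$.

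I do not expect a genuine obstacle here: the constant produced this way blows up badly as $\eta \to 0$, but the lemma only asserts finiteness and is needed precisely to justify using $\E d(g_0, w)^2$ as a Lyapunov functional in what follows. The delicate work of obtaining $\eta$-independent moment bounds is deferred to the later elliptic and parabolic estimates, where the perturbation bound \eref{perturb} and the quantitative convexity \eref{rconvex} do the real work.
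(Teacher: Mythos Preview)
Your proof is correct and follows essentially the same approach as the paper: both bound $|g_k|\le\eta^{-1}$ from the resolvent norm, use the self-consistent equation to bound $(\Im g_0)^{-2}$ by $\eta^{-2}|\zeta|^4$, and then invoke $\E h^4\le\beta^4$ to control the only random contribution to $|\zeta|$. The paper's presentation via $(\Im g_0)^{-2}=|g_0|^{-4}(\Im g_0^{-1})^{-2}$ is just a rewriting of your inequality $\Im g_0\ge\eta/|\zeta|^2$.
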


\begin{proof}
The self-consistent equation \eref{selfconsistent} and $\eta > 0$ gives
\begin{equation*}
|g_0| \leq \eta^{-1}.
\end{equation*}
Use $\E h^4 \leq \beta^4$ to compute
\begin{align*}
& \E (\Im g_0)^{-2} \\
& = \E |g_0|^{-4} (\Im g_0^{-1})^{-2} \\
& \leq \eta^{-2} \E |g_1 + \cdots + g_K + h + E + i \eta|^4 \\
& \leq C \eta^{-2} (\eta^{-1} + \beta + |E| + \eta)^4.
\end{align*}
Compute
\begin{equation*}
\E d(g_0, w)^2 \leq (\eta^{-1} + |w|)^4 (\Im w)^{-2} \E (\Im g_0)^{-2}.
\end{equation*}
Chaining these together gives the lemma.
\end{proof}

The next lemma shows the hyperbolic distance $d(g_0, w)$ is relatively concentrated when $w$ is sufficiently invariant under the dynamics.  The qualitative convexity of hyperbolic distance together with the averaging in the self-consistent equation play an essential role here.

\begin{lemma}
If $w \in \H$, $\Im w > \eta$, and
\begin{equation*}
\sqrt{d(\phi_E(w),w)} + \sqrt{d(w - i \eta/K, w)} + (\Im w)^{-2} \beta^2 \leq s \leq 1,
\end{equation*}
then the variance $\V d(g_0, w)$ of $d(g_0,w)$ satisfies
\begin{equation}
\label{e.dconcentrate}
\V d(g_0, w) \leq C s (1 + \E d(g_0, w)^2).
\end{equation}
\end{lemma}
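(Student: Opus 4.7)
My plan is to exploit the self-consistent equation $g_0 = \phi_E(z)$, with $z = (g_1 + \cdots + g_K + h + i\eta)/K$, in combination with the near-invariance of $w$ under $\phi_E$ and the shift $-i\eta/K$ and the $1/K$-variance reduction from averaging the i.i.d.\ $g_k$'s. First, since $\phi_E$ has only real coefficients, the proof of \eref{contraction} shows it is actually a hyperbolic isometry: $d(\phi_E(u), \phi_E(v)) = d(u, v)$, so $d(g_0, \phi_E(w)) = d(z, w)$. Applying \eref{almosttriangle} in both directions with $w_2 = \phi_E(w)$ and using $\sqrt{d(\phi_E(w), w)} \leq s$ will give the pointwise two-sided bound
\begin{equation*}
|d(g_0, w) - d(z, w)| \leq C s (1 + d(g_0, w) + d(z, w)),
\end{equation*}
which squares (via Cauchy--Schwarz) to the moment comparison $|\E d(g_0, w)^j - \E d(z, w)^j| \leq C s (1 + \E d(g_0, w)^2)$ for $j \in \{1, 2\}$.

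\textbf{Step 2 (decouple the $h$ perturbation).} I will split $z = \tilde z + h/K$ with $\tilde z = (g_1 + \cdots + g_K + i\eta)/K$ independent of $h$. The fourth moment bound $\E h^4 \leq \beta^4$ together with the hypothesis $(\Im w)^{-2} \beta^2 \leq s$ makes the perturbation $h/K$ small enough that \eref{perturb} applied conditionally on $\tilde z$ yields $\E[d(z, w)^2 \mid \tilde z] \leq (1 + C s) d(\tilde z, w)^2 + C s$. A direct expansion of $d(\tilde z + h/K, w) - d(\tilde z, w)$ using $\E h = 0$, $\E h^2 \leq \beta^2$, and \eref{iratio} to convert $(\Im \tilde z)^{-1}$ to $(\Im w)^{-1}$ also gives $|\E[d(z, w) \mid \tilde z] - d(\tilde z, w)| \leq C s (1 + d(\tilde z, w))$. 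Together these ensure the first two moments of $d(z, w)$ and $d(\tilde z, w)$ agree up to $C s (1 + \E d(g_0, w)^2)$.

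\textbf{Step 3 (variance reduction from averaging).} The heart of the argument is the observation $\tilde z = (z_1 + \cdots + z_K)/K$ with $z_k = g_k + i\eta/K$ i.i.d. Iterating \eref{convex} gives the pointwise bound $d(\tilde z, w) \leq K^{-1} \sum_k d(z_k, w)$, and squaring and taking expectations yields (using pairwise independence of the $z_k$ to evaluate the cross terms)
\begin{equation*}
\E d(\tilde z, w)^2 \leq K^{-1} \E d(z_1, w)^2 + (1 - K^{-1}) (\E d(z_1, w))^2.
\end{equation*}
Using the hypothesis $\sqrt{d(w - i\eta/K, w)} \leq s$, \eref{almosttriangle}, and the monotonicity $d(u + i c, v + i c) \leq d(u, v)$ for real $c > 0$, I will then get the upper bound $\E d(z_1, w)^j \leq \E d(g_0, w)^j + C s (1 + \E d(g_0, w)^2)$ for $j \in \{1, 2\}$, using $g_1 \stackrel{d}{=} g_0$.

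\textbf{Closing the loop; main obstacle.} Setting $m_j = \E d(g_0, w)^j$ and chaining the three steps produces the self-consistent inequality
\begin{equation*}
m_2 \leq K^{-1} m_2 + (1 - K^{-1}) m_1^2 + C s (1 + m_2);
\end{equation*}
since $K \geq 2$, rearranging gives $\V d(g_0, w) = m_2 - m_1^2 \leq C s (1 + m_2)$, which is the desired bound. The hardest part is Step 3: the factor $1 - K^{-1} < 1$ is crucial, for without it the self-consistent inequality collapses to a tautology. This strict contraction arises only from the i.i.d.\ cross-terms in the squared average, so it is essential to apply \eref{convex} \emph{before} squaring rather than after. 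Careful bookkeeping of the $O(s)$ error terms through the three steps---in particular ensuring each stays proportional to $1 + m_2$ rather than higher moments---is the secondary technical burden.
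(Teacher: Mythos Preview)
Your proposal is correct and follows essentially the same route as the paper: use the self-consistent equation together with the almost-triangle inequality \eref{almosttriangle}, the contraction \eref{contraction}, the perturbation bound \eref{perturb}, and the convexity \eref{convex} to compare $\E d(g_0,w)^2$ with the expectation of the squared average $K^{-1}\sum_k d(g_k,w)$, and then extract the variance from the cross terms using independence. The paper's write-up is somewhat more streamlined in two respects. First, it runs the argument as a one-directional chain of inequalities on $\E d(g_0,w)^2$ alone, keeping the error in the multiplicative form $A\le(1+Cs)B+Cs$ throughout; your two-sided moment comparisons for $j=1$ are harmless but unnecessary, since only the upper bound on $m_2$ is needed and the variance appears via the exact identity $\E\bigl(K^{-1}\sum_k d(g_k,w)\bigr)^2=\E d(g_0,w)^2-\tfrac{K-1}{K}\V d(g_0,w)$. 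Second, the paper shifts the base point from $w-i\eta/K$ to $w$ \emph{before} applying convexity, so the convex combination is directly $K^{-1}\sum_k d(g_k,w)$ rather than $K^{-1}\sum_k d(g_k+i\eta/K,w)$; this avoids your intermediate comparison of $d(z_k,w)$ with $d(g_k,w)$. Your observation that $\phi_E$ is actually an isometry (not just a contraction) is true but not used---only the contraction direction enters.
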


\begin{proof}
The following elementary observation is used throughout the proof: Any estimate of the form $A \leq (1 + C s) B + C s$ implies an estimate of the form $A^2 \leq (1 + C s) B^2 + C s$.  This uses Cauchy--Schwarz, $0 \leq s \leq 1$, and the convention the constants $C > 1 > c > 0$ are allowed to differ in each instance.

Compute
\begin{align*}
& \E d(g_0, w)^2 \\
& = \E d\left( \phi_E \left( \frac{g_1 + \cdots + g_K + h + i \eta}{K} \right), w \right)^2 \\
& \leq (1+Cs) \E d\left( \phi_E \left( \frac{g_1 + \cdots + g_K + h + i \eta}{K} \right), \phi_E(w) \right)^2 + Cs \\
& \leq (1+Cs) \E d\left( \frac{g_1 + \cdots + g_K + h}{K}, w - \frac{i \eta}{K} \right)^2 + Cs \\
& \leq (1+Cs) \E d\left( \frac{g_1 + \cdots + g_K + h}{K}, w \right)^2 + Cs \\
& \leq (1+Cs) \E d\left( \frac{g_1 + \cdots + g_K }{K}, w \right)^2 + Cs \\
& \leq (1+Cs) \E \left( \frac{d(g_1,w) + \cdots + d(g_K,w) }{K} \right)^2 + Cs \\
& = (1+Cs) \left( \E d(g_0, w)^2 - \frac{K-1}{K} \V d(g_0, w) \right) + Cs,
\end{align*}
using the self-consistent equation \eref{selfconsistent} in the first step, the triangle inequality \eref{almosttriangle} and $\sqrt{d(\phi_E(w),w)} \leq s$ in the second step, the contraction property \eref{contraction} in the third step, the triangle inequality \eref{almosttriangle} and $\sqrt{d(w - i \eta/K, w)} \leq s$ in the fourth step, the independence of $h$ from $g_1, ..., g_K$, $\E h^4 \leq \beta^4 \leq s^2 (\Im w)^4$, and the perturbation estimate \eref{perturb} in the fifth step, the convexity of hyperbolic distance \eref{convex} in the sixth step, and the identical distributions of $g_0, ..., g_K$ and independence of $g_1, ..., g_K$ in the seventh step.

The variance estimate \eref{dconcentrate} follows by rearranging.
\end{proof}

The next lemma uses the the self-consistent equation and the quantitative convexity to deduce relative concentration of the difference $g_0 - w$ from relative concentration of the hyperbolic distance $d(g_0, w)$.

\begin{lemma}
If $w \in \H$, $\Im w > \eta$,
\begin{equation*}
d(\phi_E(w),w) + (\Im w)^{-2} \beta^2 \leq r \leq 1,
\end{equation*}
\begin{equation*}
\V d(g_0, w) \leq r \E d(g_0, w)^2,
\end{equation*}
and
\begin{equation*}
\bar d = \E d(g_0, w) > 0,
\end{equation*}
then
\begin{equation}
\label{e.zconcentrate}
\P( |g_2 - g_1| \geq t r^{1/4} (1 + \bar d^{-1/2}) |g_1 - w| ) \leq C t^{-4} \quad \mbox{for } C \leq t \leq c r^{-1/4}.
\end{equation}
\end{lemma}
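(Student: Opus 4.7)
The plan is to upgrade the argument for \eref{dconcentrate} using the quantitative convexity \eref{rconvex}. First, rerun the chain of inequalities from the proof of \eref{dconcentrate}---self-consistent equation \eref{selfconsistent}, contraction \eref{contraction}, almost-triangle inequality \eref{almosttriangle}, and perturbation \eref{perturb} applied to $h$---with slack parameter $s = C\sqrt r$. The hypotheses $d(\phi_E(w),w) \leq r$, $\eta < \beta^2$, $(\Im w)^{-2}\beta^2 \leq r$, and $\Im w > \eta$ make this $s$ admissible because $d(w - i\eta/K, w) \leq \eta^2/(\Im w)^2 \leq r$. At the convexity step I substitute \eref{rconvex} in place of \eref{convex} to obtain
\[
d\!\left(\frac{g_1 + \cdots + g_K}{K}, w\right)^2 \leq (1-Y)^2 M^2, \quad Y := \frac{\sum_{j<k} |g_j - g_k|^2}{K\sum_k |g_k - w|^2}, \quad M := \max_k d(g_k, w).
\]
Since $0 \leq Y \leq 1$, $(1-Y)^2 \leq 1-Y$, and the full chain yields
\[
\E d(g_0, w)^2 \leq (1 + C\sqrt r)(\E M^2 - \E Y M^2) + C\sqrt r.
\]

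The second step bounds $\E M^2 - \E D^2$ (with $D = d(g_0, w)$) via the variance hypothesis. Using $M^2 - D_1^2 \leq 2M(M-D_1)_+$ and $(M-D_1)_+^2 \leq \sum_{k \neq 1}(D_k - D_1)^2$, exchangeability with $\E(D_k - D_1)^2 \leq 2r\E D^2$ closes a quadratic inequality in $\sqrt{\E M^2}$ to give both $\E M^2 \leq C\bar d^2$ and $\E M^2 - \E D^2 \leq C\sqrt r \bar d^2$. Rearranging the previous display produces the key moment estimate
\[
\E Y M^2 \leq C\sqrt r(1 + \bar d^2).
\]

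The last step converts this moment estimate to the tail bound. From $\sum_{j<k}|g_j-g_k|^2 = K\sum_k|g_k-\bar g|^2$ (with $\bar g = K^{-1}\sum g_k$) I get $|g_2-g_1|^2 \leq 2Y\sum_k|g_k-w|^2$. Combined with $M^2 \geq d(g_1, w)^2 = |g_1-w|^4/(\Im g_1 \Im w)^2$ and \eref{iratio}, this controls $|g_2-g_1|^2/|g_1-w|^2$ in terms of $YM^2$ on the high-probability event $\{D_1 \geq c\bar d\} \cap \{M \leq C\bar d\}$, whose complement has probability $\leq Cr$ by Chebyshev applied to the variance hypothesis. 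On this event $\Im g_1 \asymp \Im w$ by \eref{iratio} and $|g_1-w|^2 \gtrsim \bar d(\Im w)^2$, yielding a deterministic bound $|g_2-g_1|^2/|g_1-w|^2 \lesssim YM^2(1+\bar d^{-1})/\bar d^2$. To upgrade from $t^{-2}$ to $t^{-4}$ decay I estimate the second moment of $YM^2$ by repeating the entire argument at the 4th-moment level---replacing $d(g_0,w)^2$ with $d(g_0,w)^4$ and using $(1-Y)^4 \leq 1-Y$---to get $\E YM^4 \leq C\sqrt r(1+\bar d^4)$, after which Markov delivers the desired tail in the range $C \leq t \leq cr^{-1/4}$.

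The main obstacle is the 4th-moment bootstrap. The lemma's hypothesis controls only $\V d(g_0, w)$, so the analog concentration estimate $\E M^4 - \E D^4 \leq C\sqrt r\bar d^4$ does not follow directly; it must be derived from the 2nd-moment estimate combined with the a priori finiteness of $\E d(g_0, w)^4$ (established by the same $|g_0| \leq \eta^{-1}$ argument as the first lemma of this section). Absorbing the resulting higher-moment error terms into the target form $C\sqrt r (1 + \bar d^4)$, rather than into a cruder bound that would degrade the exponent on $t$, is where the real work lies.
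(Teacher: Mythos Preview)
Your approach diverges from the paper's in a way that creates a genuine gap. The paper does \emph{not} take expectations and then try to recover a tail bound by Markov. Instead it works pointwise: it defines the event
\[
\mathcal E_t = \{ |h| \leq t\beta \} \cap \bigcap_{k=0}^K \{ |d(g_k,w) - \bar d| \leq t r^{1/2} \bar d \},
\]
which has $\P(\mathcal E_t^c) \leq C t^{-2}$ by Chebyshev on the variance hypothesis and Markov on $\E h^2$. On $\mathcal E_t$ the paper reruns the chain \eref{selfconsistent}--\eref{almosttriangle}--\eref{contraction}--\eref{rconvex} \emph{deterministically}, with all almost-triangle and perturbation errors of size $C t r^{1/2}$, and obtains directly
\[
\frac{\sum_{j<k}|g_j-g_k|^2}{\sum_k |g_k-w|^2} \leq C t r^{1/2}(1+\bar d^{-1}) \quad \mbox{on } \mathcal E_t.
\]
The triangle inequality then converts this ratio bound into the statement, and substituting $t \mapsto t^2$ turns the $t^{-2}$ probability into $t^{-4}$ while turning $r^{1/2}(1+\bar d^{-1})$ into $r^{1/4}(1+\bar d^{-1/2})$ after taking a square root. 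No fourth-moment argument is needed.

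Your moment route has two concrete failures. First, the claimed pointwise inequality $|g_2-g_1|^2/|g_1-w|^2 \lesssim Y M^2 (1+\bar d^{-1})/\bar d^2$ is not what falls out of \eref{iratio}: tracking the imaginary parts gives instead a factor $(1+\bar d)^2/\bar d^2$, and after Markov against $\E(YM^2)^2 \leq \E Y M^4 \leq C\sqrt r(1+\bar d^4)$ the resulting bound carries both an uncontrolled power of $(1+\bar d)$ and an extra $r^{-1/2}$, so it does not reduce to $C t^{-4}$ in the stated range. Second, the fourth-moment bootstrap itself is blocked at the perturbation step: the analogue of \eref{perturb} for $\E d(z+X,w)^4$ would need an eighth-moment bound on $h$, but only $\E h^4 \leq \beta^4$ is assumed, and the qualitative finiteness of $\E d(g_0,w)^4$ from the first lemma of the section depends badly on $\eta$ and cannot be used quantitatively. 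The paper sidesteps both issues because, on $\mathcal E_t$, $h$ is bounded deterministically by $t\beta$ and no higher moments enter.
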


\begin{proof}
Assume $r > 0$ is small since otherwise the conclusion is vacuous.

Use $\E d(g_0, w)^2 = \bar d^2 + \V d(g_0, w)$ and $\V d(g_0, w) \leq r \E d(g_0, w)^2$ to conclude
\begin{equation*}
\E d(g_0, w)^2 \leq C \bar d^2
\end{equation*}
and
\begin{equation*}
\V d(g_0, w) \leq C r \bar d^2.
\end{equation*}

For $1 \leq t \leq r^{-1/2}$, let $\mathcal E_t$ denote the event
\begin{equation*}
|h| \leq t \beta \quad \mbox{and} \quad |d(g_k, w) - \bar d| \leq t r^{1/2} \bar d \quad \mbox{for } k = 0, ..., K.
\end{equation*}

Since $\E h^2 \leq \beta^2$ and $\E |d(g_k,w) - \bar d|^2 \leq C r \bar d^2$, Markov's inequality implies
\begin{equation*}
\P(\mathcal E_t^c) \leq C t^{-2}.
\end{equation*}
On the event $\mathcal E_t$, compute
\begin{align*}
& (1 - t r^{1/2}) \bar d \\
& \leq d(g_0, w) \\
& = d\left( \phi_E \left( \frac{g_1 + \cdots + g_K + h + i \eta}{K} \right), w \right) \\
& \leq (1 + C r^{1/2}) d\left( \phi_E \left( \frac{g_1 + \cdots + g_K + h + i \eta}{K} \right), \phi_E(w) \right) + C r^{1/2} \\
& \leq (1 + C r^{1/2} ) d\left( \frac{g_1 + \cdots + g_K}{K}, w - \frac{h + i \eta}{K} \right) + C r^{1/2} \\
& \leq (1 + C t r^{1/2}) d\left( \frac{g_1 + \cdots + g_K}{K}, w \right) + C t r^{1/2}  \\
& \leq (1 + C t r^{1/2}) \left( 1 - \frac{\sum_{1 \leq j < k \leq K} |g_j - g_k|^2}{K \sum_{1 \leq k \leq K} |g_k - w|^2} \right) \max_{1 \leq k \leq K} d(g_k, w) + C t r^{1/2} \\
& \leq (1 + C t r^{1/2}) \left( 1 - \frac{\sum_{1 \leq j < k \leq K} |g_j - g_k|^2}{K \sum_{1 \leq k \leq K} |g_k - w|^2} \right) \bar d + C t r^{1/2},
\end{align*}
using the definition of $\mathcal E_t$ in the first step, the self-consistent equation \eref{selfconsistent} in the second step, the almost triangle inequality \eref{almosttriangle} and $d(\phi_E(w),w) \leq r$ in the third step, the contraction property \eref{contraction} in the fourth step, the almost triangle inequality \eref{almosttriangle} and $\sqrt{d(w - (h + i \eta)/K, w)} \leq C (\Im w)^{-1} t \beta \leq C t r^{1/2}$ in the fifth step, the strict convexity \eref{rconvex} in the sixth step, and the definition of $\mathcal E_t$ in the seventh step.

Dividing the above inequality by $\bar d$ gives
\begin{equation*}
\P \left( \frac{\sum_{1 \leq j < k \leq K} |g_j - g_k|^2}{\sum_{1 \leq k \leq K} |g_k - w|^2} \geq t r^{1/2} (1 + \bar d^{-1}) \right) \leq \P(\mathcal E_t^c) \leq C t^{-2} \quad \mbox{for } C \leq t \leq c r^{-1/2}.
\end{equation*}

Conclude \eref{zconcentrate} by the triangle inequality.
\end{proof}

The next lemma covers the elliptic case of \tref{green}.  The proof combines the relative concentration from the above two lemmas with the elliptic dynamics.  The argument also recovers a version of the second moment bound on the Green's function obtained by Klein \cite{MR1302384}.

\begin{lemma}
\label{l.stronglyelliptic}
If $\beta \leq c$ and $|E| \leq 2 \sqrt K - C \sqrt \beta$, then
\begin{equation}
\label{e.secondmoment}
\E d(g_0, w_E)^2 \leq C \beta^{1/6}
\end{equation}
and
\begin{equation}
\label{e.stronglyelliptic}
\P( |g_0 - w_E| \geq C \beta^{1/9} |w_E| ) \leq C \beta^{1/9} |w_E|.
\end{equation}
\end{lemma}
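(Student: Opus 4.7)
The plan is to combine the two relative-concentration lemmas already established in this section with the crucial elliptic feature that $\phi_E$ is a hyperbolic isometry fixing $w_E$, bootstrap to the second-moment bound \eref{secondmoment}, and then extract the tail bound \eref{stronglyelliptic}.

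First I set parameters. The hypothesis $|E| \leq 2\sqrt K - C\sqrt\beta$ together with \eref{wz} yields $|w_E| = K^{-1/2}$ and $\Im w_E \geq c\beta^{1/4}$, so $(\Im w_E)^{-2}\beta^2 \leq C\beta^{3/2}$. Since $\phi_E(w_E) = w_E$, the quantity $d(\phi_E(w_E), w_E)$ vanishes, and $d(w_E - i\eta/K, w_E)$ is negligible because $\eta < \beta^2$. Taking $w = w_E$ in the two preceding lemmas, I may therefore use $s = r = C\beta^{3/2}$. The variance lemma then gives $\V d(g_0, w_E) \leq C\beta^{3/2}(1 + M_2)$, and the z-concentration lemma gives $\P(|g_2 - g_1| \geq t\beta^{3/8}(1 + \bar d^{-1/2})|g_1 - w_E|) \leq C t^{-4}$, where $M_2 = \E d(g_0, w_E)^2$ and $\bar d = \E d(g_0, w_E)$.

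The heart of the proof is the second-moment bound $M_2 \leq C\beta^{1/6}$. Because $\phi_E$ is a hyperbolic isometry fixing $w_E$, the self-consistent equation \eref{phi} yields the exact identity $d(g_0, w_E) = d(\bar g + (h + i\eta)/K, w_E)$ with $\bar g = (g_1 + \cdots + g_K)/K$. Applying \eref{perturb} to absorb the real shift $h/K$, \eref{almosttriangle} to absorb the imaginary shift $i\eta/K$, and then the strict convexity \eref{rconvex} together with the i.i.d.\ identity $\sum_k |g_k - w_E|^2 = K|\bar g - w_E|^2 + \sum_k |g_k - \bar g|^2$, I would obtain an inequality in which the discount multiplying $\max_k d(g_k, w_E)^2$ is the ratio $K|\bar g - w_E|^2 / \sum_k |g_k - w_E|^2$. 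The main obstacle is to show this ratio is small on average: since independence gives $\E|\bar g - w_E|^2 = |\E g_0 - w_E|^2 + (\V g_0)/K$, this reduces to establishing $|\E g_0 - w_E|^2 \lesssim \V g_0$, i.e.\ that the law of $g_0$ is approximately centered at $w_E$ relative to its spread.

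I would prove the centering claim by bootstrap. The variance bound concentrates $d(g_0, w_E)$ near its mean $\bar d$, and the z-concentration lemma clusters $g_1, \dots, g_K$ together, so that $\phi_E$ acts on the cluster as a near-rotation about $w_E$. The real shift $h/K$ with $\E h = 0$ averages against this rotation and pulls $\E g_0$ back toward $w_E$; quantifying the rotation via $|1 - \phi_E'(w_E)| = 2\sqrt K \Im w_E \geq c\beta^{1/4}$ yields a recursive improvement of $M_2$ that closes at $C\beta^{1/6}$, with the fractional exponent absorbing the Markov-type slack. Finally, to pass from \eref{secondmoment} to \eref{stronglyelliptic}, I apply Chebyshev to the second-moment bound on a high-probability event and combine with the z-concentration lemma; the Lipschitz character of $\phi_E$ near $w_E$ together with $|w_E| = K^{-1/2}$ converts hyperbolic closeness into the Euclidean bound $|g_0 - w_E| \leq C\beta^{1/9}|w_E|$.
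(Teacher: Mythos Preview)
Your setup is right: with $w = w_E$ the two preceding lemmas apply with $s, r \sim \beta^{3/2}$, and the crux is indeed the bound on $M_2 = \E d(g_0, w_E)^2$.  The mechanism you propose for that bound, however, does not work.

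The discount factor in \eref{rconvex} is, as you compute, $K|\bar g - w_E|^2/\sum_k|g_k - w_E|^2$.  But the $z$-concentration lemma shows the $g_k$ \emph{cluster}: with high probability $|g_j - g_k| \ll |g_1 - w_E|$, which forces this ratio toward $1$, not toward something small.  Your reduction to $|\E g_0 - w_E|^2 \lesssim \V g_0$ would at best yield a discount of order $1/K$, and even that inequality is not usable as stated, since the ratio and $\max_k d(g_k, w_E)$ are correlated and you cannot simply divide expectations.  The subsequent rotation/bootstrap paragraph never says how the angle $|1 - \phi_E'(w_E)| \sim \Im w_E$ converts $\E h = 0$ into a quantitative bound on $|\E g_0 - w_E|$, nor how a recursion on $M_2$ is set up or why it lands at $\beta^{1/6}$; as written it is a sketch of an intuition, not a proof.

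The paper's argument uses the clustering in the \emph{opposite} way.  From $K w_E^2 + E w_E + 1 = 0$ and $\phi_E^{-1}(z) = (-z^{-1} - E)/K$ one has the exact factorization
\[
g_0 - \phi_E^{-1}(g_0) \;=\; \frac{(g_0 - w_E)(g_0 - \bar w_E)}{g_0},
\]
and since $g_0 \in \H$, $\bar w_E \in -\H$ give $|g_0 - \bar w_E| \ge |g_0 - w_E|$, this yields $|g_0 - w_E|^2 \le |g_0|\,|g_0 - \phi_E^{-1}(g_0)|$.  The self-consistent equation gives $\phi_E^{-1}(g_0) = K^{-1}(g_1 + \cdots + g_K + h + i\eta)$, so clustering of the $g_k$ (together with an independent copy $\tilde g$) makes the right side small on a positive-probability event on which one also arranges $d(g_0, w_E)^2 \ge c\,M_2$.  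Reading off the resulting deterministic inequality gives \eref{secondmomentprime} and hence \eref{secondmoment} and \eref{stronglyelliptic}.  This factorization is the missing idea in your proposal.
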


\begin{proof}
Assume $\alpha \beta \leq 1$ and $|E| \leq 2 \sqrt K - \sqrt{\alpha \beta}$ for some $\alpha \geq 1$ to be determined.

Taylor expanding the definition \eref{wz} of $w_E$ gives
\begin{equation*}
c (\alpha \beta)^{1/4} \leq \Im w_E \leq C
\end{equation*}
and
\begin{equation*}
c \leq |w_E| \leq C.
\end{equation*}
The second moment bound \eref{secondmoment} and the concentration bound \eref{stronglyelliptic} follow from the slightly stronger second moment bound
\begin{equation}
\label{e.secondmomentprime}
\E d(g_0, w_E)^2 \leq C (\Im w_E)^{-2} \beta^{2/3}.
\end{equation}
Assuming \eref{secondmomentprime}, Markov's inequality gives
\begin{align*}
& \P(|g_0 - w_E| \geq \beta^{1/9}) \\
& \leq \beta^{-2/9}\E |g_0-w_E|^2 \\
& = (\Im w_E)\beta^{-2/9}  \E(\Im g_0)d(g_0,w_E) \\
& \leq (\Im w_E)\beta^{-2/9} (\E d(g_0,w_E)^2)^{1/2} (\E|g_0|^2)^{1/2}\\
&\leq C \beta^{1/9},
\end{align*}
using Cauchy-Schwarz in the third step and the estimate $1+d(z,w_E)\geq c|z|$ with \eref{secondmomentprime} in the fourth step.
Thus the concentration bound \eref{stronglyelliptic} follows from the stronger second moment bound \eref{secondmomentprime}.

To prove \eref{secondmomentprime}, it is safe to assume
\begin{equation*}
\E d(g_0, w_E)^2 \geq (\Im w_E)^{-2} \beta^{2/3}.
\end{equation*}

Since $\phi_E(w_E) = w_E$ and $\eta < \beta^2$, obtain
\begin{equation*}
\sqrt{d(\phi_E(w_E), w_E)} + \sqrt{d(w_E - i \eta/K, w_E)} + (\Im w_E)^{-2} \beta^2 \leq C (\Im w_E)^{-2} \beta^2.
\end{equation*}
The variance bound \eref{dconcentrate} then gives
\begin{equation*}
\V d(g_0, w_E) \leq C (\Im w_E)^{-2} \beta^2 (1 + \E d(g_0, w_E)^2).
\end{equation*}
By the assumed lower bound on $\E d(g_0, w_E)^2$, this implies
\begin{equation*}
\V d(g_0, w_E) \leq C \beta^{4/3} \E d(g_0, w_E)^2.
\end{equation*}

Since
\begin{equation*}
d(\phi_E(w_E), w_E) + (\Im w_E)^{-2} \beta^2 \leq C \beta^{4/3},
\end{equation*}
the concentration bound \eref{zconcentrate} gives
\begin{equation*}
\P( |g_2 - g_1| \geq t (\Im w_E)^{1/2} \beta^{1/6} |g_1 - w_E| ) \leq t^{-4} \quad \mbox{for } C \leq t \leq c \beta^{-1/3}.
\end{equation*}

Now we exploit the elliptic dynamics of $\phi_E$. Introduce a copy $\tilde g$ of $g_0$ that is independent from the random variables $g_0, ..., g_K, h$.  By Markov's inequality and the above concentration bound, the event
\begin{equation*}
\mathcal E =
\begin{cases}
|h| \leq C \beta \\
|g_k - \tilde g| \leq C (\Im w_E)^{1/2} \beta^{1/6} |\tilde g - w_E| & \mbox{for } k = 0, ..., K \\
d(g_0, w_E)^2 \geq c \E d(g_0, w_E)^2 \\
\end{cases}
\end{equation*}
holds with positive probability. On the event $\mathcal E$, compute
\begin{align*}
& \frac{|g_0 - w_E|^2}{|g_0|} \\
& \leq \frac{| g_0 - w_E| |g_0 - \bar w_E|}{|g_0|} \\
& = |g_0 - \phi_E^{-1}(g_0)| \\
& = \left|g_0 - \frac{g_1 + \cdots + g_K + h + i \eta}{K}\right| \\
& \leq C (\Im w_E)^{1/2} \beta^{1/6} |\tilde g - w_E| + C \beta \\
& \leq C (\Im w_E)^{1/2} \beta^{1/6} |g_0 - w_E| + C \beta.
\end{align*}
Using $\beta \leq c$ small, the above inequality implies
\begin{equation*}
|g_0 - w_E| \leq C (\Im w_E)^{1/2} \beta^{1/6}.
\end{equation*}
Imposing the bound $\alpha \geq C$ gives
\begin{equation*}
\Im g_0 \geq \Im w_E - |g_0 - w_E| \geq c \Im w_E
\end{equation*}
and
\begin{equation*}
d(g_0, w_E) \leq C (\Im w_E)^{-1} \beta^{1/3}.
\end{equation*}
Therefore, on an event of positive probability,
\begin{equation*}
c \E d(g_0, w_E)^2 \leq d(g_0, w_E)^2 \leq C (\Im w_E)^{-2} \beta^{2/3}.
\end{equation*}
Since the left and right hand sides in the above inequality are deterministic, the second moment bound \eref{secondmoment} holds.
\end{proof}

The next lemma covers the parabolic case of \tref{green}.  The proof is similar to that of the previous lemma, but with two major differences.  First, since the invariant point $w_E$ of $\phi_E$ may lie on the real line, a perturbation is used as a base point for the approximate Lyapunov function.  Second, it is not possible to bound second moment $\E d(g_0, w)^2$ uniformly for any choice of $w \in \H$ as $\eta \to 0$.

\begin{lemma}
\label{l.weaklyparabolic}
If $\alpha \geq 1$, $\alpha \beta \leq c$, and $||E| - 2 \sqrt K| \leq \sqrt{\alpha \beta}$, then
\begin{equation}
\label{e.weaklyparabolic}
\P(|g_0 - w_E| \geq C (\alpha \beta)^{1/20}) \leq C (\alpha \beta)^{1/20}.
\end{equation}
\end{lemma}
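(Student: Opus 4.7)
The plan is to imitate the proof of \lref{stronglyelliptic}, but with a base point for the approximate Lyapunov function chosen as a perturbation of $w_E$ into the upper half-plane. In the parabolic regime $||E| - 2 \sqrt K| \leq \sqrt{\alpha \beta}$, one has $\Im w_E \leq C (\alpha \beta)^{1/4}$, and $\Im w_E$ may vanish entirely when $|E| \geq 2 \sqrt K$. Using $w = w_E$ directly would leave the error terms $(\Im w_E)^{-2} \beta^2$ and $\sqrt{d(w_E - i \eta / K, w_E)}$ uncontrollable. I therefore set $w = w_E + i y$ for a parameter $y > 0$ to be optimized. This choice ensures $\Im w \geq y > \eta$, keeps both error terms small, but pays the price of a nontrivial discrepancy $d(\phi_E(w), w)$ which must be tracked.

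The discrepancy is estimated directly. Using the fixed-point identity $K w_E + E = -1/w_E$, one computes
\begin{equation*}
\phi_E(w_E + i y) = \frac{w_E}{1 - i K y w_E},
\end{equation*}
from which $\phi_E(w) - w = i y (K w_E^2 - 1) + O(y^2)$. The multiplier $\lambda = K w_E^2$ satisfies $|\lambda| = 1$ on the spectral band, and the parabolic hypothesis places it within $C(\alpha \beta)^{1/4}$ of $1$. Together with the bound $\Im w \geq y$, this yields $d(\phi_E(w), w) \leq C[(\alpha \beta)^{1/2} + y^2]$. With these inputs the variance estimate \eref{dconcentrate} and the relative concentration estimate \eref{zconcentrate} become available with parameters of admissible size.

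I then repeat the positive-probability event argument of \lref{stronglyelliptic}. Introducing an independent copy $\tilde g$ of $g_0$, Markov's inequality combined with the two concentration estimates produces an event of positive probability on which $h$, the deviations $|g_k - \tilde g|$, and $d(g_0, w)$ are all controlled. On this event the self-consistent equation $\phi_E^{-1}(g_0) = (g_1 + \cdots + g_K + h + i \eta)/K$ together with the algebraic factorization
\begin{equation*}
g_0 - \phi_E^{-1}(g_0) = \frac{(g_0 - w_E)(g_0 - \bar w_E)}{g_0}
\end{equation*}
yields a closed inequality on $|g_0 - w_E|$. Since $|w_E - \bar w_E| = 2 |\Im w_E| \leq C (\alpha \beta)^{1/4}$ in the parabolic regime, $|g_0 - \bar w_E|$ is comparable to $|g_0 - w_E|$ up to an additive error of this order, so the inequality closes and, after optimizing the choice of $y$, delivers a deterministic a priori bound $|g_0 - w_E| \leq C (\alpha \beta)^{1/20}$ on a set of positive probability. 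A Markov conversion exactly as at the end of \lref{stronglyelliptic} then promotes this into the probability statement \eref{weaklyparabolic}.

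The main obstacle is that $\phi_E$ has multiplier of modulus exactly $1$ at its fixed point in the parabolic regime, so the linear contraction that drove the elliptic argument is absent. The proof must replace that linear contraction by the quadratic factorization of $g_0 - \phi_E^{-1}(g_0)$. The extra factor of roughly $|g_0 - w_E|$ introduced by the quadratic expression, compounded with the need to balance $y$ simultaneously against $d(\phi_E(w), w)$, $\beta^2/y^2$, and the approximation error $|g_0 - \bar w_E| \approx |g_0 - w_E|$, degrades the exponent from $1/9$ in the elliptic case down to $1/20$. A secondary subtlety noted just before the statement is that $\E d(g_0, w)^2$ cannot be bounded uniformly as $\eta \to 0$ for any fixed $w \in \H$; this is why the a priori bound on $|g_0 - w_E|$ must be extracted from a positive-probability event rather than from an unconditional second-moment estimate.
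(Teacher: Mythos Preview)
Your overall strategy---perturbing the base point to $w = w_E + i y$, applying the variance bound \eref{dconcentrate} and the relative concentration \eref{zconcentrate}, then exploiting the quadratic factorization of $g_0 - \phi_E^{-1}(g_0)$---matches the paper. The factorization is slightly misstated: the second factor is $g_0 - (K w_E)^{-1}$, the other root of $\phi_E(z)=z$, which equals $\bar w_E$ only when $|E|\le 2\sqrt K$. Since both $(K w_E)^{-1}$ and $\bar w_E$ lie within $C(\alpha\beta)^{1/4}$ of $w_E$ in the parabolic window, the algebra closes either way, so this is harmless.

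The genuine gap is the final step. You propose a \emph{positive-probability} event on which $h$, the deviations $|g_k-\tilde g|$, and $d(g_0,w)$ are all controlled, followed by a ``Markov conversion exactly as at the end of \lref{stronglyelliptic}.'' But that conversion in the elliptic proof works by \emph{sandwiching} $\E d(g_0,w_E)^2$: the event contains the lower bound $d(g_0,w_E)^2 \ge c\,\E d(g_0,w_E)^2$, while the algebra on the event produces an upper bound on $d(g_0,w_E)$ (using $\Im g_0 \ge c\,\Im w_E$), and the resulting second-moment bound is fed into Markov. In the parabolic regime this sandwich fails on both ends. You yourself note that $\E d(g_0,w)^2$ cannot be bounded uniformly as $\eta\to 0$; and on your event the deduced inequality $|g_0-w_E|\le C(\alpha\beta)^{1/20}$ says nothing about $d(g_0,w)$, since $\Im g_0$ may be as small as $\eta$. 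A bound on $|g_0-w_E|$ valid merely on an event of positive probability cannot be promoted to the tail estimate \eref{weaklyparabolic}.

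The fix, which is what the paper does, is to \emph{drop} the $d(g_0,w)$ condition from the event. The remaining constraints---$|h|\le t\beta$ and $|g_k-\tilde g|\le t(\alpha\beta)^{1/16}|\tilde g - w|$ for $k=0,\dots,K$---define an event $\mathcal E_t$ of \emph{high} probability, $\P(\mathcal E_t^c)\le C t^{-4}$. On $\mathcal E_t$ the quadratic inequality yields $|g_0-w_E|\le C t(\alpha\beta)^{1/16}$, so one reads off $\P(|g_0-w_E|\ge C t(\alpha\beta)^{1/16})\le C t^{-4}$ directly, with no Markov step; setting $t=(\alpha\beta)^{-1/80}$ gives \eref{weaklyparabolic}. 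For this to go through one must first dispose of the case $\E d(g_0,w)^2 < 1$ (where Markov on $d(g_0,w)^2$ already gives the conclusion), since otherwise the factor $(1+\bar d^{-1/2})$ in \eref{zconcentrate} and the passage from \eref{dconcentrate} to $\V d(g_0,w)\le r\,\E d(g_0,w)^2$ are not under control---a dichotomy your outline omits.
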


\begin{proof}
Let
\begin{equation*}
w = w_E + i (\alpha \beta)^{1/4}.
\end{equation*}
Taylor expanding the definitions of $w_E$ and $\phi_E$, obtain
\begin{equation*}
|\phi_E(w) - w| \leq C (\alpha \beta)^{1/2}
\end{equation*}
and
\begin{equation*}
(\alpha \beta)^{1/4} \leq \Im w \leq C (\alpha \beta)^{1/4}.
\end{equation*}

Since $|g_0 - w| \geq (\alpha \beta)^{1/6}$ implies $d(g_0, w) \geq c (\alpha \beta)^{-1/12}$, Markov's inequality gives
\begin{equation*}
\P(|g_0 - w_E| \geq (\alpha \beta)^{1/6}) \leq C (\alpha \beta)^{1/6} \E d(g_0, w)^2.
\end{equation*}
In particular, it is safe to assume
\begin{equation*}
\E d(g_0, w)^2 \geq 1
\end{equation*}
as otherwise the conclusion \eref{weaklyparabolic} holds.

Since
\begin{equation*}
\sqrt{d(w, \phi(w))} + \sqrt{d(w - i \eta/K, w)} + (\Im w)^{-2} \beta^2 \leq C (\alpha \beta)^{1/4},
\end{equation*}
the variance bound \eref{dconcentrate} gives
\begin{equation*}
\V d(g_0, w) \leq C (\alpha \beta)^{1/4} (1 + \E d(g_0, w)^2).
\end{equation*}
Since $\E d(g_0, w)^2 \geq 1$, obtain
\begin{equation*}
\V d(g_0, w) \leq C (\alpha \beta)^{1/4} \E d(g_0, w)^2.
\end{equation*}

Since
\begin{equation*}
d(w, \phi(w)) + (\Im w)^{-2} \beta^2 \leq C (\alpha \beta)^{1/2} \leq C (\alpha \beta)^{1/4} \E d(g_0, w)^2,
\end{equation*}
the concentration bound \eref{zconcentrate} gives
\begin{equation*}
\P( |g_2 - g_1| \geq t (\alpha \beta)^{1/16} |g_1 - w| ) \leq C t^{-4} \quad \mbox{for } C \leq t \leq c (\alpha \beta)^{-1/16}.
\end{equation*}

Let $\tilde g$ be a copy of $g_0$ independent from $g_0, ..., g_K, h$.  For $t > 0$, let $\mathcal E_t$ denote the event
\begin{equation*}
\begin{cases}
|h| \leq t \beta \\
|g_k - \tilde g| \leq t (\alpha \beta)^{1/16} |\tilde g - w| & \mbox{for } k = 0, ..., K.
\end{cases}
\end{equation*}
By Markov's inequality and the concentration bound,
\begin{equation*}
\P( \mathcal E_t^c ) \leq C t^{-4} \quad \mbox{for } C \leq t \leq c (\alpha \beta)^{-1/16}.
\end{equation*}

On the event $\mathcal E_t$, compute
\begin{align*}
& \frac{|g_0 - w_E|^2 - C (\alpha \beta)^{1/2}}{|g_0|}  \\
& \leq \frac{|g_0 - w_E| |g_0 - K^{-1} w_E^{-1}|}{|g_0|} \\
& = |g_0 - \phi_E^{-1}(g_0)| \\
& = \left| g_0 - \frac{g_1 + \cdots + g_K + h + i \eta}{K} \right| \\
& \leq C t (\alpha \beta)^{1/16} |\tilde g - w| + C t \beta \\
& \leq C t (\alpha \beta)^{1/16} |g_0 - w_E| + C t \beta.
\end{align*}
Assuming $\alpha \beta \leq c$ and $t (\alpha \beta)^{1/16} \leq c$ are sufficiently small, the above inequality implies
\begin{equation*}
|g_0 - w_E| \leq C t (\alpha \beta)^{1/16}.
\end{equation*}
Therefore, it follows that
\begin{equation*}
\P( |g_0 - w_E| \geq t (\alpha \beta)^{1/16} ) \leq C t^{-4} \quad \mbox{for } C \leq t \leq c (\alpha \beta)^{-1/16}.
\end{equation*}
Conclude \eref{weaklyparabolic} by setting $t = (\alpha \beta)^{-1/80} \in [C, c (\alpha \beta)^{-1/16}]$.
\end{proof}

\section{The hyperbolic case}

This section covers the hyperbolic case of \tref{green}.  It is convenient to impose the following standing hypotheses throughout the section.  Assume $0 < \beta < 1$, $0 < \eta < \beta^2$, and $|E| \geq 2 \sqrt K$.  Assume $\nu \in M_1(\R)$ is the law of $V(p)$ and $\mu \in M_1(\H)$ is the law of the punctured Green's functions $G^p_{E + i \eta}(q,q)$ for $p \sim q \in \T$.  Finally, $C > 1 > c > 0$ denote constants that depend only on $(K,L)$ but may differ in each instance.

The goal is to show most of the mass of $\mu$ lies near the fixed point $w_E$ of the M\"obius transformation $\phi_E$.  This will be accomplished through fixed-point iteration of the convolution equation \eref{convolution} and an appropriate choice of topology. Since the fixed-point iteration converges, it suffices to start from a good initial guess and prove good bounds are preserved during iteration.

The following lemma computes the Cauchy projection \eref{projection} of the fixed-point iteration onto the real line.  Let $D(\R)$ denote the space of probability densities on the real line.

\begin{lemma}
\label{l.densityiteration}
If $\mu_0 \in M_1(\H)$, $f_0 \in D(\R)$, $d \sigma_{\mu_0}(x) = f_0(x) \,dx$,
\begin{equation*}
\mu_{n+1} = (\mu_n^{*K} * \nu * \delta_{E + i \eta}) \circ \psi,
\end{equation*}
and
\begin{equation*}
f_{n+1} = ((f_n^{*K} * \nu * \delta_E * \sigma_{i \eta}) \circ \psi) \psi',
\end{equation*}
then $\mu_n \in M_1(\H)$, $f_n \in D(\R)$, and $d \sigma_{\mu_n}(x) = f_n(x) \,dx$.
\end{lemma}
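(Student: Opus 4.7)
The plan is to induct on $n$. The base case $n = 0$ is the hypothesis. For the inductive step, assume $\mu_n \in M_1(\H)$, $f_n \in D(\R)$, and $d \sigma_{\mu_n}(x) = f_n(x)\,dx$, and verify the three conclusions for $n + 1$.

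First, $\mu_{n+1} \in M_1(\H)$ because $\mu_n$ is supported in $\H$, convolution with the real measure $\nu$ and the point mass $\delta_{E + i \eta}$ preserves this ($\H + \R + \H \subseteq \H$), and $\psi$ is an automorphism of $\H$. Next, apply Cauchy projection to the recursion and invoke \lref{projection}. Writing $\delta_{E + i \eta} = \delta_E * \delta_{i \eta}$ and using $\sigma_{\delta_{i \eta}} = \sigma_{i \eta}$ directly from the definition \eref{projection}, the distributive property of the Cauchy projection over convolution gives
\begin{equation*}
\sigma_{\mu_n^{*K} * \nu * \delta_{E + i \eta}} = \sigma_{\mu_n}^{*K} * \nu * \delta_E * \sigma_{i \eta} = f_n^{*K} * \nu * \delta_E * \sigma_{i \eta},
\end{equation*}
where the last equality uses the inductive hypothesis. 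Composing with $\psi$ and appealing to the automorphism part of \lref{projection} then yields
\begin{equation*}
\sigma_{\mu_{n+1}} = (f_n^{*K} * \nu * \delta_E * \sigma_{i \eta}) \circ \psi.
\end{equation*}

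Finally, convert this measure-level identity into a density. Since $\psi(x) = -1/x$ is a smooth bijection of $\R \setminus \{0\}$ with $\psi'(x) = 1/x^2 > 0$, the change of variables formula gives $(g\,dx) \circ \psi = (g \circ \psi)\,\psi'\,dx$ for any density $g$ on $\R$; applying this with $g = f_n^{*K} * \nu * \delta_E * \sigma_{i \eta}$ produces $d \sigma_{\mu_{n+1}}(x) = f_{n+1}(x)\,dx$. Nonnegativity and unit total mass of $f_{n+1}$ are automatic, since $\{0\}$ is Lebesgue null and so the singular point of $\psi$ is harmless. The only technical wrinkle I foresee is that \lref{projection} is stated for $M_1(\H)$ factors while $\nu \in M_1(\R)$, but this is handled either by a limiting argument (replace $\nu$ with $\nu * \delta_{i \ep}$, apply \lref{projection}, and let $\ep \to 0^+$) or by rerunning the short Fubini computation from the proof of \lref{projection} with $\nu$ substituted for one of the $\mu_i$.
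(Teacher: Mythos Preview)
Your proof is correct and follows essentially the same approach as the paper: induction on $n$, use of \lref{projection} to push the Cauchy projection through the convolution, and a change of variables for the $\psi$ step. The paper is slightly terser---it does not explicitly invoke the automorphism half of \lref{projection}, instead verifying the pushforward-density formula for $\psi$ directly---and you are actually more careful than the paper in flagging and resolving the $\nu \in M_1(\R)$ versus $M_1(\H)$ mismatch.
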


\begin{proof}
Proceed by induction on $n$.  The base case is part of the hypothesis, so assume the conclusion holds for $n$.  Observe $f * \rho \in D(\R)$ whenever $f \in D(\R)$ and $\rho \in M_1(\R)$.  It follows from \lref{projection} that $f_n^{*K} * \nu * \delta_E * \sigma_{i \eta} \in D(\R)$ is the density of the Cauchy projection of $\ \mu_n^{*K} * \nu * \delta_{E + i \eta} \in M_1(\H)$.  Therefore, it suffices to prove $(f \circ \psi) \psi' \in D(\R)$ is the density of $\rho \circ \psi$ whenever $f \in D(\R)$ is the density of $\rho \in M_1(\R)$.  For a test function $g \in C_c(\R)$ compute
\begin{equation*}
\int g \,d(\rho \circ \psi) = \int (g \circ \psi^{-1}) \,d\rho = \int g(\psi^{-1}(x)) f(x) \,dx = \int g(y) f(\psi(y)) \psi'(y) \,dy,
\end{equation*}
using $\psi'(y) = 1/y^2 > 0$ and substitution.
\end{proof}

Given $f \in D(\R)$ and $s, r > 0$, consider sub-Cauchy tail bounds of the form
\begin{equation*}
f(x) \leq s (|x| - r)_+^{-2} = \begin{cases}
s (|x| - r)^{-2} & \mbox{if } |x| > r \\
\infty & \mbox{otherwise.}
\end{cases}
\end{equation*}
Roughly speaking, this says the measure given by the density $f$ is close to the Dirac measure $\delta_x$.  To see how bounds like this propagate under the fixed-point iteration, it suffices to estimate their behavior under convolution and pushforward.

The following lemma estimates the sub-Cauchy tails of a convolution of densities.

\begin{lemma}
If $s_1, s_2, r_1, r_2 > 0$, $f_1, f_2 \in D(\R)$, and $f_k(x) \leq s_k (|x| - r_k)_+^{-2},$ then
\begin{equation}
\label{e.convolutiondensity}
(f_1 * f_2)(x) \leq (s_1 + s_2 + 8 s_1 s_2 t^{-1}) (|x| - r_1 - r_2 - t)_+^{-2}
\end{equation}
holds for all $t > 0$.
\end{lemma}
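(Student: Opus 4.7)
The plan is to estimate $(f_1 * f_2)(x)$ pointwise. Fix $x$ with $|x| > r_1 + r_2 + t$ (otherwise the stated bound is vacuous), set $R = |x| - r_1 - r_2 - t > 0$, and by the symmetry $x \mapsto -x$ assume $x > 0$. The idea is to split the integral $\int f_1(y) f_2(x - y)\,dy$ according to whether each factor lies in its ``bulk'' region ($|y| \leq r_1 + t/2$ for $f_1$; $|x - y| \leq r_2 + t/2$ for $f_2$) or its ``tail'' region. The triangle inequality $|y| + |x - y| \geq |x| > r_1 + r_2 + t$ rules out both factors being in the bulk simultaneously, leaving three regions to control.

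In the two ``bulk/tail'' regions, apply the sub-Cauchy bound pointwise on one factor and integrate the density of the other. For instance, when $|y| \leq r_1 + t/2$ the triangle inequality gives $|x - y| - r_2 \geq R + t/2$, so $f_2(x - y) \leq s_2 R^{-2}$, and $\int f_1 \leq 1$ yields a contribution of at most $s_2 R^{-2}$. The symmetric case contributes at most $s_1 R^{-2}$. Together these produce the $s_1 + s_2$ portion of the coefficient.

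The main work is the ``tail/tail'' region, where both sub-Cauchy bounds apply and one must bound
\begin{equation*}
s_1 s_2 \int_{\{|y|>r_1+t/2,\,|x-y|>r_2+t/2\}} \frac{dy}{(|y| - r_1)^2 (|x - y| - r_2)^2}.
\end{equation*}
Split this set into three subregions according to the signs of $y$ and $x - y$: the bounded interval $(r_1 + t/2,\, x - r_2 - t/2)$ and the two half-lines $\{y > x + r_2 + t/2\}$ and $\{y < -r_1 - t/2\}$. A substitution on each subregion turns the integrand into $(u^2 v^2)^{-1}$ with $u, v \geq t/2$, where $u + v = R + t$ on the bounded interval and $u + v \geq R + t$ on each half-line. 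The bounded-interval piece is handled by splitting at the midpoint and using $\max(u, v) \geq (R + t)/2$ to reduce to an elementary $\int u^{-2}\,du$; the half-line pieces are handled similarly via $\int_{t/2}^{\infty} v^{-2} \alpha^{-2}\,dv = 2(t \alpha^2)^{-1}$ with $\alpha \geq R + t$. Summing the three pieces gives the $8 s_1 s_2 t^{-1}$ term.

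The main obstacle is purely one of bookkeeping: one must extract the correct $R^{-2}$ decay from each subregion rather than the cruder $t^{-2}$ decay a naive supremum would give, which requires tracking the two ``cusps'' of the integrand (near $|y| = r_1$ and near $|x - y| = r_2$) separately in each of the three geometric pieces.
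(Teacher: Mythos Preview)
Your decomposition is sound and yields a proof of the lemma with \emph{some} absolute constant in place of $8$, but not the constant $8$ itself. Tracking your outline: on the bounded interval you have $u+v=R+t$ with $u,v\ge t/2$, and splitting at the midpoint gives at most $16\,t^{-1}(R+t)^{-2}$ (each half contributes $4(R+t)^{-2}\int_{t/2} u^{-2}\,du \le 8\,t^{-1}(R+t)^{-2}$); the two half-line pieces add a further $4\,t^{-1}(R+t)^{-2}$. Summed this is about $20\,t^{-1}(R+t)^{-2}\le 20\,t^{-1}R^{-2}$, not $8\,t^{-1}R^{-2}$, so the sentence ``Summing the three pieces gives the $8 s_1 s_2 t^{-1}$ term'' does not hold as written. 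Since in the application the constant is absorbed into a generic $C$, this is harmless for the paper's purposes, but the lemma as literally stated is not established by your argument.

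The paper's proof is shorter and recovers the exact constant. It uses only three intervals, $(-\infty,\,r_1+t]$, $[r_1+t,\,x-r_2-t]$, and $[x-r_2-t,\,\infty)$, rather than your five regions. Your two half-line pieces are already contained in the first and third intervals, where one bounds one factor pointwise and integrates the other density to at most $1$; there is no need to invoke the sub-Cauchy bound on those half-lines at all. The middle interval is your bounded piece but with cutoff $t$ instead of $t/2$, and the midpoint split there gives exactly $8\,t^{-1}(x-r_1-r_2)^{-2}$, with nothing left over. The observation you are missing is that the left tail $\{y<-r_1-t/2\}$ can be lumped together with the bulk $\{|y|\le r_1+t/2\}$ into the single half-line $\{y\le r_1+t\}$, since on that whole set one only needs $\int f_1\le 1$ and the pointwise bound on $f_2$.
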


\begin{proof}
Suppose $t > 0$ and $x > r_1 + r_2 + t$.  Compute
\begin{align*}
& (f_1 * f_2)(x) \\
& = \int_{-\infty}^{r_1 + t} f_1(y) f_2(y - x) \,dy \\
& \qquad + \int_{x - r_2 - t}^\infty f_1(y) f_2(y - x) \,dy \\
& \qquad \qquad + \int_{r_1+t}^{x - r_2 - t} f_1(y) f_2(y - x) \,dy \\
& \leq \sup_{y \leq r_1 + t} f_2(y - x) \\
& \qquad + \sup_{y \geq x - r_2 - t} f_1(y)  \\
& \qquad \qquad + \int_{r_1+t}^{x - r_2 - t} s_1 s_2 (y - r_1)^{-2} (x - r_2 - y)^{-2} \,dy \\
& \leq s_2 (x - r_1 - r_2 - t)^{-1} \\
& \qquad + s_1 (x - r_1 - r_2 - t)^{-1}  \\
& \qquad \qquad + 8 s_1 s_2 t^{-1} (x - r_1 - r_2)^{-2}.
\end{align*}
The case $-x > r_1 + r_2 + t$ is symmetric.
\end{proof}

The following lemma captures the contraction effect of the hyperbolicity of $\phi_E$.  The definition \eref{wz} of $w_E$ and the standing hypothesis $|E| \geq 2 \sqrt K$ imply $w_E \in \R$ and $|w_E| \leq 1/\sqrt K$.

\begin{lemma}
If $f_1 \in D(\R)$, $r, s > 0$, $|w_E| r < 1$, and $f_1(K w_E + x) \leq s (|x| - r)_+^{-2}$, then $f_2(x) = x^{-2} f_1(-x^{-1} - E)$ satisfies
\begin{equation}
\label{e.hyperboliccontraction}
f_2(w_E + x) \leq \tau s (|x| - \tau r)_+^{-2}
\end{equation}
where
\begin{equation*}
\tau = \frac{w_E^2}{(1 - |w_E| r)^2}.
\end{equation*}
\end{lemma}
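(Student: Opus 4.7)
The plan is to exploit the fact that $w_E$ is a fixed point of $\phi_E(z) = -1/(Kz+E)$; equivalently, $-1/w_E = Kw_E + E$. The map whose action on densities relates $f_2$ to $f_1$ is $y \mapsto -1/(y+E)$, and this map sends $Kw_E$ to $w_E$. Hence a localization bound for $f_1$ near $Kw_E$ should, after change of variables, produce a localization bound for $f_2$ near $w_E$, with the localization scale shrunk by the linearization $|(d/dy)|_{Kw_E}(-1/(y+E))| = w_E^2$. The factor $\tau$ is exactly this linearization modified by a correction arising from the nonzero spread $r$.

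First I would perform the change of variables explicitly. Write $f_2(w_E + x) = (w_E + x)^{-2} f_1(-(w_E+x)^{-1} - E)$ and use the fixed-point identity $-E - Kw_E = 1/w_E$ to rewrite the argument of $f_1$ as
\[
-(w_E + x)^{-1} - E = Kw_E + y, \qquad y = \frac{1}{w_E} - \frac{1}{w_E + x} = \frac{x}{w_E(w_E + x)}.
\]
Applying the hypothesis $f_1(Kw_E + y) \leq s(|y|-r)_+^{-2}$ and noting $|y| = |x|/(|w_E|\,|w_E+x|)$, a direct simplification gives
\[
f_2(w_E + x) \leq \frac{s\, w_E^2}{(|x| - r|w_E|\,|w_E + x|)_+^2}.
\]

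It then remains to verify the elementary inequality
\[
(|x| - r|w_E|\,|w_E + x|)_+ \geq (1 - |w_E| r)\,(|x| - \tau r)_+,
\]
which, upon substitution and using $\sqrt{\tau} = |w_E|/(1 - |w_E|r)$, is exactly \eref{hyperboliccontraction}. In the trivial case $|x| \leq \tau r$ the right side is zero, so nothing is needed. In the case $|x| > \tau r$ the inequality reduces, after clearing denominators and rearranging, to $|w_E + x| \leq |x| + |w_E|/(1 - |w_E|r)$, which follows from the ordinary triangle inequality $|w_E+x| \leq |w_E| + |x|$ together with $|w_E| r < 1$.

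The main obstacle is essentially bookkeeping rather than an analytic difficulty; the one thing to verify is that $|x| > \tau r$ automatically implies strict positivity of $|x| - r|w_E|\,|w_E+x|$, so that the intermediate display above is meaningful. This follows from the crude bound $r|w_E|\,|w_E+x| \leq r w_E^2 + r|w_E|\,|x|$ and the inequality $\tau r > r w_E^2/(1-|w_E|r)$, which holds because $1 - |w_E|r < 1$.
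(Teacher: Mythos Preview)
Your proposal is correct and follows essentially the same route as the paper: both proofs substitute into $f_2(w_E+x)$, use the fixed-point relation $-E-Kw_E=1/w_E$ to write the argument of $f_1$ as $Kw_E+\frac{x}{w_E(w_E+x)}$, arrive at the intermediate bound $w_E^2 s\,(|x|-r|w_E||w_E+x|)_+^{-2}$, and then reduce the remaining inequality to the triangle inequality $|w_E+x|\le |w_E|+|x|$. The only cosmetic difference is that the paper presents the final step as a monotone chain (factoring out $(1-|w_E|r)$ and then enlarging the threshold $\frac{w_E^2}{1-|w_E|r}r$ to $\tau r$), whereas you isolate the equivalent elementary inequality and verify positivity separately.
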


\begin{proof}
Compute
\begin{align*}
& f_2(w_E + x) \\
& = \frac{1}{(w_E + x)^2} f_1\left( - \frac{1}{w_E + x} - E \right) \\
& \leq \frac{s}{(w_E + x)^2} \left( \left| \frac{-1}{w_E + x} - E - K w_E \right| - r \right)_+^{-2} \\
& = \frac{s}{(w_E + x)^2} \left( \left| \frac{-1}{w_E + x} + \frac{1}{w_E} \right| - r \right)_+^{-2} \\
& = w_E^2 s (|x| - |w_E| |w_E + x| r)_+^{-2} \\
& \leq w_E^2 s ( (1 - |w_E| r) |x| - w_E^2 r)_+^{-2} \\
& = \frac{w_E^2}{(1 - |w_E| r)^2} s \left( |x| - \frac{w_E^2}{1 - |w_E|r} r\right)_+^{-2} \\
& \leq \tau s (  |x| - \tau r)_+^{-2},
\end{align*}
using the definition of $f_2$ in the first step, the hypothesis on $f_1$ in the second step, the quadratic equation $K w_E^2 + E w_E + 1 = 0$ in the third step, algebra in the fourth step, the triangle inequality in the fifth step, algebra in the sixth step, and the definition of $\tau$ in the seventh step.
\end{proof}

The following lemma covers the hyperbolic case of \tref{green}.

\begin{lemma}
\label{l.stronglyhyperbolic}
If $\beta \leq c$ and $|E| \geq 2 \sqrt K + C \sqrt \beta$, then
\begin{equation}
\label{e.stronglyhyperbolic}
\mu( |z - w_E| \geq C \beta^{1/4} |w_E|) \leq C \beta^{1/2} |w_E|.
\end{equation}
\end{lemma}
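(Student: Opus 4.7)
The plan is to project the self-consistent convolution equation \eref{convolution} onto the real line via the Cauchy projection of \lref{projection} and propagate sub-Cauchy tail bounds along the fixed-point iteration. By \lref{iteration}, the iteration $\mu_n \mapsto \mu_{n+1} = (\mu_n^{*K} * \nu * \delta_{E+i\eta}) \circ \psi$ converges weakly to $\mu$ from any starting measure, so I would pick $\mu_0 = \delta_{w_E + i}$ and track the Cauchy projection densities $f_n = d\sigma_{\mu_n}/dx$, which by \lref{densityiteration} satisfy $f_{n+1} = ((f_n^{*K} * \nu * \delta_E * \sigma_{i\eta}) \circ \psi)\psi'$. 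The goal is to show that the $f_n$ remain in an invariant region of the form $f_n(w_E + x) \leq s_n (|x| - r_n)_+^{-2}$, with the parameters $(s_n, r_n)$ converging to a fixed point $(s^*, r^*)$ of scale $s^* \lesssim |w_E|^2 \beta^{3/4}$ and $r^* \lesssim \beta^{3/4}$.

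Each iteration step decomposes into four substeps. First, the $K$-fold self-convolution: iterating \eref{convolutiondensity} $K-1$ times with an optimal choice of the auxiliary parameter yields $f_n^{*K}(Kw_E + x) \leq (K s_n + O(s_n^{3/2}))(|x| - K r_n - O(s_n^{1/2}))_+^{-2}$. Second, convolving with $\nu$: by \eref{subcauchy} and the mean-zero condition \eref{meanzero}, the measure $\nu$ has sub-Cauchy tails of scale $O(L\beta)$ and width $O(\beta)$ centered at the origin, and an adaptation of \eref{convolutiondensity} to a Stieltjes-type integral against such a measure gives a combined sub-Cauchy bound preserving the center $Kw_E$. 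Third, convolving with $\delta_E * \sigma_{i\eta}$ merely translates the center to $Kw_E + E$ and increases the width by $O(\eta) = O(\beta^2)$, which is negligible. Fourth, applying $\psi$ and the Jacobian $\psi'$, the hyperbolic contraction \eref{hyperboliccontraction} rescales the tail coefficient by $\tau = w_E^2/(1 - |w_E| r)^2$ and returns the center to $w_E$. The crucial observation is that Taylor expanding \eref{wz} under the standing hypothesis $|E| \geq 2\sqrt K + C\sqrt\beta$ gives $K w_E^2 \leq 1 - c \beta^{1/4}$, so the composite contraction factor $K\tau$ sits strictly below $1$ by $c \beta^{1/4}$. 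Combining, the iteration $s_{n+1} \leq (K\tau) s_n + C |w_E|^2 \beta$ (the $\nu$-source being scaled by the final factor $\tau \approx w_E^2$) converges to $s^* \lesssim |w_E|^2 \beta^{3/4}$, with an analogous decay for $r_n$.

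Passing to the limit yields the tail bound on $f = d\sigma_\mu/dx$. Integrating outside the interval of half-width $\rho$ around $w_E$ gives $\sigma_\mu(|x - w_E| \geq \rho) \lesssim |w_E|^2 \beta^{3/4}/\rho$, and with $\rho \asymp \beta^{1/4} |w_E|$ this produces an $O(\beta^{1/2} |w_E|)$ bound. Finally, to convert the density bound on $\sigma_\mu$ into the measure bound \eref{stronglyhyperbolic} on $\mu$ itself, I would observe that for any $z = a + ib \in \H$ with $|z - w_E| \geq 2\rho$, the Cauchy measure $\sigma_z$ places a universal positive fraction of its mass on $\R \setminus [w_E - \rho, w_E + \rho]$ (case split on whether $b \leq \rho$ or $b > \rho$), so that the $\mu$-mass of $\{|z - w_E| \geq 2\rho\}$ is controlled by the $\sigma_\mu$-mass outside $[w_E - \rho, w_E + \rho]$ up to a universal constant. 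I expect the principal obstacle to be the bookkeeping of $(s_n, r_n)$ across the four substeps, especially adapting \eref{convolutiondensity} from a density convolution to the measure-valued convolution with $\nu$ while tracking the $|w_E|$-dependence of the source; the remaining Taylor expansions and the final density-to-measure conversion are routine once set up.
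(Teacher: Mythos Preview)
Your overall strategy matches the paper's: project to the real line via the Cauchy projection, propagate sub-Cauchy tail bounds of the form $f(w_E+x)\le s(|x|-r)_+^{-2}$ through the density iteration of \lref{densityiteration}, exploit the hyperbolic contraction \eref{hyperboliccontraction}, and finally convert the tail bound on $\sigma_\mu$ back to a bound on $\mu$. The conversion step you describe is essentially the one the paper uses.

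There is, however, a genuine gap in the execution, and it is the choice of initial point $\mu_0=\delta_{w_E+i}$. This gives $f_0(w_E+x)=\pi^{-1}(1+x^2)^{-1}$, so $s_0=1/\pi$ is of order one. Now look at the near-parabolic edge $|E|=2\sqrt K+C\sqrt\beta$, where Taylor expansion of \eref{wz} gives $Kw_E^2=1-c\beta^{1/4}$ and hence $K\tau\le 1-c\beta^{1/4}$: the linear contraction per step is only $c\beta^{1/4}$. The nonlinear term coming from \eref{convolutiondensity} after $K$-fold self-convolution and your optimal choice $t\asymp s_n^{1/2}$ is $\tau\cdot O(s_n^{3/2})$; for this to be dominated by the contraction gain $c\beta^{1/4}s_n$ you need $s_n^{1/2}\lesssim K\beta^{1/4}$, i.e.\ $s_n\lesssim K^2\beta^{1/2}$. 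Your $s_0=1/\pi$ lies \emph{outside} this basin for small $\beta$, so the recursion $s_{n+1}\le K\tau s_n+C\tau s_n^{3/2}+C\tau\beta$ does not contract from $s_0$; in fact the $t$-contribution to $r_n$ is also order one, which pushes $|w_E|r_n$ toward $1$ and inflates $\tau$ further. The iteration can diverge rather than converge to the claimed $(s^*,r^*)$.

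The paper's fix is simple but essential: start from $\mu_0=\delta_{w_E+i\beta w_E^2}$, so that $f_0(w_E+x)\le \pi^{-1}\beta w_E^2\,x^{-2}$ already satisfies the target bound $s=\beta^{3/4}w_E^2$, $r=\beta^{1/4}|w_E|$ (for $\beta$ small). One then checks by a single induction step that this region is \emph{invariant} under the iteration, using \eref{convolutiondensity} with the fixed choice $t=\beta^{1/2}w_E^2$ and \eref{hyperboliccontraction}, splitting into the two regimes $|w_E|\gtrless\beta^{1/8}$ to close. No convergence of $(s_n,r_n)$ is needed; the bound holds for every $n$ and passes to the weak limit $\mu$ by \lref{iteration}.

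Two minor points. First, no ``adaptation of \eref{convolutiondensity} to a Stieltjes-type integral'' is needed for $\nu$: the sub-Cauchy hypothesis \eref{subcauchy} directly gives $\nu$ a density $h$ with $h(x)\le C\beta(|x|-\beta)_+^{-2}$, so \eref{convolutiondensity} applies verbatim. Second, your stated target $r^*\lesssim\beta^{3/4}$ does not match the $\rho\asymp\beta^{1/4}|w_E|$ you use downstream; the correct invariant width is $\beta^{1/4}|w_E|$.
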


\begin{proof}
Assume $\alpha \beta \leq 1$ and $|E| \geq 2 \sqrt K + \sqrt{\alpha \beta}$ for some large $\alpha \geq 1$ to be determined.

Step 1.
Consider the fixed-point iteration defined in \lref{densityiteration} with
\begin{equation*}
\mu_0 = \delta_{w_E + i \beta w_E^2}
\end{equation*}
and
\begin{equation*}
f_0(x) \,dx = d \sigma_{\mu_0}(x).
\end{equation*}
The first goal is to prove
\begin{equation}
\label{e.ih}
f_n(w_E + x) \leq \beta^{3/4} w_E^2 (|x| -  \beta^{1/4} |w_E|)_+^{-2}
\end{equation}
by induction on $n$.

Step 2.
For the base case, use the density formula for Cauchy measure to compute
\begin{equation*}
f_0(w_E + x) = \frac{1}{\pi} \frac{\beta w_E^2}{\beta^2 w_E^4 + x^2} \leq \beta^{3/4} w_E^2 x^{-2} \leq \beta^{3/4} w_E^2 (|x| - \beta^{1/4} |w_E|)^{-2}
\end{equation*}
when $|x| > \beta^{1/4} |w_E|$ and $\beta^{1/4} \leq 1/\pi$.

Step 3.
For the induction step, assume \eref{ih} holds for $f_n$.

The sub-Cauchy hypothesis \eref{subcauchy} on the random potential implies the measure $\nu$ has density $h \in D(\R)$ that satisfies
\begin{equation*}
h(x) \leq C \beta ( |x| - \beta )_+^{-2}.
\end{equation*}
Since $\eta < \beta^2$, the Cauchy measure $\sigma_{i \eta}$ has density $g \in D(\R)$ that satisfies
\begin{equation*}
g(x) \leq C \beta^2 ( |x| - \beta^2 )_+^{-2}.
\end{equation*}

Using the densities $h$ and $g$ and the formula $\psi(z) = -1/z$, the definition of $f_{n+1}$ in \lref{densityiteration} can be re-written as
\begin{equation*}
f_{n+1}(x) = x^{-2} (f_n^{*K} * h * g)(-x^{-1} - E).
\end{equation*}

Using the induction hypothesis \eref{ih}, the tail bounds on $h$ and $g$, and the convolution density inequality \eref{convolutiondensity} with $t = \beta^{1/2} w_E^2$ obtain
\begin{equation*}
(f_n^{*K} * h * g)(K w_E + x) \leq s (|x| - r)_+^{-2}
\end{equation*}
where
\begin{equation*}
s = K \beta^{3/4} w_E^2 + C \beta w_E^2 + C \beta^{5/4}
\end{equation*}
and
\begin{equation*}
r = K \beta^{1/4} |w_E| + C \beta^{1/2} w_E^2 + C \beta.
\end{equation*}

Applying the pushforward contraction inequality \eref{hyperboliccontraction}, obtain
\begin{equation*}
f_{n+1}(w_E + x) \leq \tau s (|x| - \tau r)_+^{-2}
\end{equation*}
where
\begin{equation*}
\tau = \frac{w_E^2}{(1 - |w_E| r)^2}.
\end{equation*}

To close the induction, it suffices to prove
\begin{equation}
\label{e.ihclose}
\tau s \leq \beta^{3/4} w_E^2 \quad \mbox{and} \quad \tau r \leq \beta^{1/4} |w_E|.
\end{equation}

Taylor expanding the definition \eref{wz} of $w_E$ and using $|E| \geq 2 \sqrt K + \sqrt{\alpha \beta}$ and $\alpha \beta \leq 1$ gives
\begin{equation*}
|w_E| \leq K^{-1/2} - c (\alpha \beta)^{1/4}.
\end{equation*}
Break into cases according to the size of $w_E$:

First, if $|w_E| \geq \beta^{1/8}$, then
\begin{equation*}
s \leq (K + C \beta^{1/4}) \beta^{3/4} w_E^2,
\end{equation*}
\begin{equation*}
r \leq (K + C \beta^{1/4}) \beta^{1/4} |w_E|,
\end{equation*}
and
\begin{equation*}
\tau \leq w_E^2 (1 + C \beta^{1/4}) \leq K^{-1} + C \beta^{1/4} - c (\alpha \beta)^{1/4}.
\end{equation*}
Thus, if $\alpha \geq C$, then the good $c (\alpha \beta)^{1/4}$ term beats the bad $C \beta^{1/4}$ terms and \eref{ihclose} holds.

Second, if $|w_E| \leq \beta^{1/8}$, then
\begin{equation*}
s \leq C \beta,
\end{equation*}
\begin{equation*}
r \leq C \beta^{3/8},
\end{equation*}
and
\begin{equation*}
\tau \leq C w_E^2.
\end{equation*}
If $\beta \leq \alpha^{-1} \leq c$, then the extra $\beta^{1/4}$ beats the constants and \eref{ihclose} holds.

Step 4.  To conclude the lemma, observe the Cauchy measure satisfies
\begin{equation*}
\sigma_z([\Im z, \infty)) = \sigma_i([1,\infty)) > 0.
\end{equation*}
Use this to compute
\begin{align*}
& \mu_n(|z - w_E| \geq r) \\
& = \int \id_{|z - w_E| \geq r} \,d\mu_n(z) \\
& \leq (\sigma_i([1,\infty)))^{-1} \int \id_{|x - w_E| \geq r} \,d \sigma_{\mu_n}(x) \\
& = (\sigma_i([1,\infty)))^{-1} \int \id_{|x - w_E| \geq r} f_n(x) \,dx.
\end{align*}
In particular, for $r > \beta^{1/4} |w_E|$, the estimate \eref{ih} gives
\begin{equation*}
\mu_n(|z - w_E| \geq r) \leq C \int_{|x| > r} \beta^{3/4} w_E^2 (x - \beta^{1/4} |w_E|)_+^{-2} \,dx \leq C \beta^{3/4} w_E^2 (r - \beta^{1/4} |w_E|)^{-1}.
\end{equation*}
Setting $r = 2 \beta^{1/4} |w_E|$ yields
\begin{equation*}
\mu_n(|z - w_E| \geq 2 \beta^{1/4} |w_E|) \leq C \beta^{1/2} |w_E|.
\end{equation*}
Since \lref{iteration} implies $\mu_n \to \mu$ weakly, the conclusion \eref{stronglyhyperbolic} follows.
\end{proof}

\section{Delocalization}

This section proves the theorems from the introduction.  As in the previous two sections, it is convenient to let $C > 1 > c > 0$ denote constants that depend only on $K$ and $L$ but may differ in each instance.

\begin{proof}[Proof of \tref{green}]
Apply \lref{stronglyelliptic}, \lref{weaklyparabolic}, and \lref{stronglyhyperbolic}.  Choosing $\alpha \geq C$ large enough, and then $\beta \leq c$ small enough, the elliptic regime $|E| \leq 2 \sqrt K - C \beta^{1/2}$, parabolic regime $||E| - 2 \sqrt K| \leq \sqrt{\alpha \beta}$, and hyperbolic regime $|E| \geq 2 \sqrt K + C \beta^{1/2}$ cover all of the energies.
\end{proof}

An estimate of the Lyapunov exponent \eref{lyapunov} is required to invoke the Aizenman--Warzel criterion for delocalization.  This is obtained in two steps.  First, the sub-Cauchy density bound \eref{subcauchy} is used to obtain uniform weak integrability bounds on the punctured Green's function.

\begin{lemma}
\label{l.weak}
For all $p \sim q \in \T$, $z \in \H$, and $t > 0$,
\begin{equation*}
\P( |G^p_z(q,q)| \geq t^{-1} ) \leq C \beta^{-1} t
\end{equation*}
and
\begin{equation*}
\P( |G^p_z(q,q)| \leq (t^{-1} + |z|)^{-1} ) \leq C \beta^{-1} t.
\end{equation*}
\end{lemma}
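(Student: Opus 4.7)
The starting point is the self-consistent equation \eref{selfconsistent}. Let $q_1, \ldots, q_K$ denote the neighbors of $q$ other than $p$, set $g_k = G^q_z(q_k, q_k)$, and write $Y = g_1 + \cdots + g_K$, so that
\[
G^p_z(q,q) = \frac{-1}{V(q) + Y + z}.
\]
The random variable $Y$ depends only on the potential in the $K$ disjoint subtrees rooted at the $q_k$ (obtained after puncturing $q$), so it is independent of $V(q)$. By homogeneity of $\T$, each $g_k$ has the same law as $G^p_z(q,q)$ itself, which means that the first bound of the lemma, once established, can be re-invoked on the individual $g_k$ inside the proof of the second.

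For the first bound, the event $|G^p_z(q,q)| \geq t^{-1}$ is equivalent to $|V(q) + Y + z| \leq t$, which conditional on $(Y, z)$ places $V(q)$ in an interval on $\R$ of length at most $2t$. For $0 < t < \beta$, the sub-Cauchy hypothesis \eref{subcauchy} bounds the conditional probability by $L t/\beta$, and averaging over $Y$ gives the claim; for $t \geq \beta$ the bound is trivial once $C \geq L$. For the second bound, a reverse triangle inequality reduces the event $|G^p_z(q,q)| \leq (t^{-1}+|z|)^{-1}$ to the weaker event $|V(q) + Y| \geq t^{-1}$. I would split this into three overlapping cases via $|V(q) + Y|^2 = (V(q) + \Re Y)^2 + (\Im Y)^2$: (i) $\Im Y \geq t^{-1}/\sqrt 2$, forcing some $\Im g_k \geq t^{-1}/(K\sqrt 2)$ and therefore some $|g_k|$ large; (ii) $|\Re Y| \geq t^{-1}/(2\sqrt 2)$, which similarly forces some $|g_k|$ large; (iii) the remaining case, in which the bounds on $\Re Y$ and $\Im Y$ combined with $|V(q) + Y| \geq t^{-1}$ force $|V(q)| \geq t^{-1}/(2\sqrt 2)$. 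Cases (i) and (ii) are each handled by applying the first bound to each $g_k$ and union-bounding, yielding an $O(K^2 \beta^{-1} t)$ contribution; case (iii) is handled by Markov's inequality using \eref{fourthmoment}, yielding an $O(\beta^4 t^4)$ contribution.

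The only technical wrinkle is to verify that the $O(\beta^4 t^4)$ bound from case (iii) is dominated by the target $C \beta^{-1} t$. In the non-trivial regime $t \leq \beta$ this amounts to $\beta^5 t^3 \leq \beta^8 \leq 1$, while for $t > \beta$ the target already exceeds~$1$. No subtler analysis is required beyond bookkeeping the constants depending on $K$ and $L$; the essential structural inputs are the self-consistent equation, the independence of $V(q)$ from the subtree Green's functions, and the homogeneity of $\T$.
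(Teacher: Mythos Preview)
Your proof is correct and follows essentially the same approach as the paper: the first bound uses the sub-Cauchy hypothesis on $V(q)$ conditional on the subtree Green's functions, and the second bound uses a union bound over the summands together with the already-proved first bound. The paper's version of the second bound is a bit more direct---it skips the real/imaginary split and simply observes that $|g_1+\cdots+g_K+h|\geq t^{-1}$ forces one of the $K+1$ summands to have magnitude at least $(K+1)^{-1}t^{-1}$, handling the $|h|$ tail via the sub-Cauchy bound rather than the fourth moment---but the structural content is identical.
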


\begin{proof}
Consider the random variables $g_0, ..., g_K \in \H$ and $h \in \R$ defined along with the self-consistent equation \eref{selfconsistent} in the introduction.  The sub-Cauchy density bound \eref{subcauchy} implies
\begin{equation*}
\P(|h| \leq \beta t) \leq C t
\end{equation*}
and
\begin{equation*}
\P(|h| \geq \beta t^{-1}) \leq C t,
\end{equation*}
Using the independence of $h$ from $g_1, ..., g_K$, obtain
\begin{equation*}
\P(|g_0| \geq t^{-1}) \leq \P( |g_1 + \cdots + g_K + h + z| \leq t) \leq C \beta^{-1} t.
\end{equation*}
Using the identical distributions of the $g_k$, obtain
\begin{align*}
& \P(|g_0| \leq (t^{-1} + |z|)^{-1}) \\
& = \P( |g_1 + \cdots + g_K + h + z| \geq t^{-1} + |z| ) \\
& \leq K \P( |g_0| \geq (K+1)^{-1} t^{-1}) + \P ( |h| \geq (K+1)^{-1} t^{-1}) \\
& \leq C \beta^{-1} t.
\end{align*}
Finally, recall that $g_0$ has the same law as $G^p_z(q,q)$ for $p \sim q \in \T$.
\end{proof}

The Lyapunov exponent can be controlled by interpolating the main Green's function estimate with the uniform weak integrability.

\begin{lemma}
\label{l.lyapunov}
If $E \in \R$, $0 < \eta \leq \beta^2$, and $\beta \leq c$, then the Lyapunov exponent \eref{lyapunov} satisfies
\begin{equation*}
|\lambda_{E + i \eta} - \log |w_E|| \leq C \beta^{1/21}.
\end{equation*}
\end{lemma}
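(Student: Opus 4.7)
The plan is to compare $\E \log |g|$ to $\log |w_E|$ by decomposing along the event produced by \tref{green} and bounding the complement via H\"older's inequality, using the weak integrability bounds of \lref{weak}.

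Write $g = G^p_{E + i \eta}(q,q)$ so that $\lambda_{E+i\eta} - \log |w_E| = \E \log |g / w_E|$.  Let $\mathcal{G} = \{ |g - w_E| \leq \alpha \beta^{1/20} |w_E| \}$.  By \tref{green}, $\P(\mathcal{G}^c) \leq \alpha \beta^{1/20} |w_E|$, and on $\mathcal{G}$ the quantity $g/w_E$ lies within $\alpha \beta^{1/20}$ of $1$, so $|\log|g/w_E|| \leq C \beta^{1/20}$.  Hence the good event contributes at most $C \beta^{1/20}$.

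For the bad event contribution, I would apply H\"older's inequality with exponent $p = 22$:
\begin{equation*}
|\E[\log |g/w_E| \, \id_{\mathcal{G}^c}]| \leq \| \log|g/w_E| \|_{L^{22}} \, \P(\mathcal{G}^c)^{21/22}.
\end{equation*}
A layer-cake integration converts the weak tail bounds of \lref{weak} on $|g|$ and $|g|^{-1}$ into a moment bound $\| \log|g| \|_{L^{22}} \leq C(\log \beta^{-1} + \log(2 + |E|))$; combined with the deterministic estimate $|\log|w_E|| \leq C(1 + \log(2+|E|))$ this controls the $L^{22}$-norm of $\log|g/w_E|$.  The tail factor is $\P(\mathcal{G}^c)^{21/22} \leq C \beta^{21/440} |w_E|^{21/22}$, and since $|w_E| \leq C/(1+|E|)$ the quantity $|w_E|^{21/22} \log(2+|E|)$ is uniformly bounded in $E$.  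Therefore the bad event contributes at most $C \beta^{21/440} \log \beta^{-1}$, which is bounded by $C \beta^{1/21}$ for $\beta$ sufficiently small because $21/440 - 1/21 = 1/9240 > 0$.

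The main difficulty is the balance between the exponent $p$ in H\"older's inequality and the logarithmic factor coming from the weak $L^p$ bounds on $\log|g|$.  The exponent $p$ must be strictly greater than $21$ so that $(1 - 1/p)/20 > 1/21$, in order for a small power of $\beta$ to absorb the logarithm; $p = 22$ is the smallest integer for which this works, which is exactly why the exponent $1/21$ appears in the statement.  Summing the good and bad event contributions yields the claimed estimate.
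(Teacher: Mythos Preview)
Your proof is correct and follows essentially the same strategy as the paper: split on the event from \tref{green}, bound the good part trivially, and bound the bad part by combining its small probability with the weak integrability of \lref{weak}. The paper packages the bad-event step as a direct layer-cake truncation (stated tersely as ``interpolation''), obtaining $C\beta^{1/20}|\log\beta|$ before absorbing the logarithm, while your H\"older-with-$p=22$ route gives the slightly weaker $C\beta^{21/440}|\log\beta|$; both are at most $C\beta^{1/21}$ for small $\beta$, so the difference is only cosmetic.
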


\begin{proof}
Let $g = G^p_z(q,q)$.  By the Green's function estimate \eref{green},
\begin{equation*}
\P(|g - w_E| \geq C \beta^{1/20}|w_E|) \leq C \beta^{1/20} |w_E|.
\end{equation*}
Taylor expanding the definition \eref{wz} of $w_E$, obtain
\begin{equation*}
\frac{c}{1+|E|} \leq |w_E| \leq \frac{C}{1 + |E|}.
\end{equation*}
The weak bounds in \lref{weak} give
\begin{equation*}
\P (|g| \leq t |w_E|) \leq C \beta^{-1} t
\end{equation*}
and
\begin{equation*}
\P (|g| \geq t^{-1} |w_E|) \leq C \beta^{-1} |w_E|^{-1} t
\end{equation*}
for $t > 0$.  Interpolation gives
\begin{equation*}
|\E \log |g| - \log |w_E|| \leq C \beta^{1/20} |\log \beta|.
\end{equation*}
Conclude using $\beta \leq c$ small to absorb the logarithm.
\end{proof}

The delocalization results from the introduction can now be proved.

\begin{proof}[Proof of \tref{spectral}]
Since  Aizenman \cite{MR1301371} proved the spectral measure of $H$ is almost surely pure point in $\{ E \in \R : |E| > K + 1 + \ep \}$ when $\beta \leq c_\ep$, it suffices to prove the absolutely continuous part of the theorem.

From the definition \eref{wz} of $w_E$, obtain $1/\sqrt K \geq |w_E| > 1/K$ for $|E| < K+1$.  Moreover, since $|w_E| = 1/K$ and $\frac{d}{dE} |w_E| \neq 0$ for $E = \pm (K + 1)$, obtain $\log |w_E| \geq c \ep - \log K$ for $|E| \leq K + 1 - \ep$.  Therefore, when $|E| \leq K + 1 - \ep$, $\eta \leq \beta^2$, and $\beta \leq c \ep^{21}$,  \lref{lyapunov} implies $\lambda_{E + i \eta} \geq \log |w_E| - c \beta^{1/21} \geq c \ep - \log K$.  In particular, \eref{criterion} holds and the spectral measure of $H$ is almost surely absolutely continuous in $\{ E \in \R : |E| < K + 1 - \ep \}$.
\end{proof}

\begin{proof}[Proof of \cref{dynamical}]
When $\P(|V(p)| \leq \beta) = 1$, the spectrum of $H$ is almost surely contained in $[-2 \sqrt K - \beta, 2 \sqrt K + \beta]$.  Since this is strictly contained in $[-K-1,K+1]$ for small $\beta > 0$, \tref{spectral} implies the entire spectral measure is almost surely absolutely continuous when the disorder is small.  The decay bound \eref{dynamical} now follows by the RAGE (Ruelle–Amrein–Georgescu–Enss) Theorem.
\end{proof}

\begin{bibdiv}
\begin{biblist}

\bib{arXiv:2503.08949}{article}{
   author={Aggarwal, Amol},
   author={Lopatto, Patrick},
   title={Mobility Edge for the Anderson Model on the Bethe Lattice},
   journal={arXiv preprint},
   volume={arXiv:2503.08949},
   date={2025},
   note={\url{https://arxiv.org/abs/2503.08949}},
}

\bib{MR1301371}{article}{
   author={Aizenman, Michael},
   title={Localization at weak disorder: some elementary bounds},
   note={Special issue dedicated to Elliott H. Lieb},
   journal={Rev. Math. Phys.},
   volume={6},
   date={1994},
   number={5A},
   pages={1163--1182},
   issn={0129-055X},
   review={\MR{1301371}},
   doi={10.1142/S0129055X94000419},
}

\bib{MR1244867}{article}{
   author={Aizenman, Michael},
   author={Molchanov, Stanislav},
   title={Localization at large disorder and at extreme energies: an
   elementary derivation},
   journal={Comm. Math. Phys.},
   volume={157},
   date={1993},
   number={2},
   pages={245--278},
   issn={0010-3616},
   review={\MR{1244867}},
}

\bib{MR3055759}{article}{
   author={Aizenman, Michael},
   author={Warzel, Simone},
   title={Resonant delocalization for random Schr\"odinger operators on tree
   graphs},
   journal={J. Eur. Math. Soc. (JEMS)},
   volume={15},
   date={2013},
   number={4},
   pages={1167--1222},
   issn={1435-9855},
   review={\MR{3055759}},
   doi={10.4171/JEMS/389},
}

\bib{MR3364516}{book}{
   author={Aizenman, Michael},
   author={Warzel, Simone},
   title={Random operators},
   series={Graduate Studies in Mathematics},
   volume={168},
   note={Disorder effects on quantum spectra and dynamics},
   publisher={American Mathematical Society, Providence, RI},
   date={2015},
   pages={xiv+326},
   isbn={978-1-4704-1913-4},
   review={\MR{3364516}},
   doi={10.1090/gsm/168},
}

\bib{MR4328063}{article}{
   author={Alt, Johannes},
   author={Ducatez, Raphael},
   author={Knowles, Antti},
   title={Delocalization transition for critical Erd\"os-R\'enyi graphs},
   journal={Comm. Math. Phys.},
   volume={388},
   date={2021},
   number={1},
   pages={507--579},
   issn={0010-3616},
   review={\MR{4328063}},
   doi={10.1007/s00220-021-04167-y},
}

\bib{MR4691859}{article}{
   author={Alt, Johannes},
   author={Ducatez, Raphael},
   author={Knowles, Antti},
   title={Localized phase for the Erd\"os-R\'enyi graph},
   journal={Comm. Math. Phys.},
   volume={405},
   date={2024},
   number={1},
   pages={Paper No. 9, 74},
   issn={0010-3616},
   review={\MR{4691859}},
   doi={10.1007/s00220-023-04918-z},
}

\bib{MR3707062}{article}{
   author={Anantharaman, Nalini},
   author={Sabri, Mostafa},
   title={Quantum ergodicity for the Anderson model on regular graphs},
   journal={J. Math. Phys.},
   volume={58},
   date={2017},
   number={9},
   pages={091901, 10},
   issn={0022-2488},
   review={\MR{3707062}},
   doi={10.1063/1.5000962},
}

\bib{MR4645723}{article}{
   author={Arras, Adam},
   author={Bordenave, Charles},
   title={Existence of absolutely continuous spectrum for Galton-Watson
   random trees},
   journal={Comm. Math. Phys.},
   volume={403},
   date={2023},
   number={1},
   pages={495--527},
   issn={0010-3616},
   review={\MR{4645723}},
   doi={10.1007/s00220-023-04798-3},
}

\bib{MR2547606}{article}{
   author={Froese, Richard},
   author={Hasler, David},
   author={Spitzer, Wolfgang},
   title={Absolutely continuous spectrum for a random potential on a tree
   with strong transverse correlations and large weighted loops},
   journal={Rev. Math. Phys.},
   volume={21},
   date={2009},
   number={6},
   pages={709--733},
   issn={0129-055X},
   review={\MR{2547606}},
   doi={10.1142/S0129055X09003724},
}

\bib{MR1302384}{article}{
   author={Klein, Abel},
   title={Absolutely continuous spectrum in the Anderson model on the Bethe
   lattice},
   journal={Math. Res. Lett.},
   volume={1},
   date={1994},
   number={4},
   pages={399--407},
   issn={1073-2780},
   review={\MR{1302384}},
   doi={10.4310/MRL.1994.v1.n4.a1},
}

\bib{MR3962004}{article}{
   author={Bauerschmidt, Roland},
   author={Huang, Jiaoyang},
   author={Yau, Horng-Tzer},
   title={Local Kesten-McKay law for random regular graphs},
   journal={Comm. Math. Phys.},
   volume={369},
   date={2019},
   number={2},
   pages={523--636},
   issn={0010-3616},
   review={\MR{3962004}},
   doi={10.1007/s00220-019-03345-3},
}

\bib{MR4692880}{article}{
   author={Huang, Jiaoyang},
   author={Yau, Horng-Tzer},
   title={Spectrum of random $d$-regular graphs up to the edge},
   journal={Comm. Pure Appl. Math.},
   volume={77},
   date={2024},
   number={3},
   pages={1635--1723},
   issn={0010-3640},
   review={\MR{4692880}},
   doi={10.1002/cpa.22176},
}

\bib{MR3390777}{article}{
   author={Bapst, Victor},
   title={The large connectivity limit of the Anderson model on tree graphs},
   journal={J. Math. Phys.},
   volume={55},
   date={2014},
   number={9},
   pages={092101, 20},
   issn={0022-2488},
   review={\MR{3390777}},
   doi={10.1063/1.4894055},
}

\end{biblist}
\end{bibdiv}

\end{document}